\documentclass[11pt]{amsart}
\headheight=8pt     \topmargin=0pt
\textheight=624pt   \textwidth=432pt
\oddsidemargin=18pt \evensidemargin=18pt

\usepackage{amssymb}
\usepackage{verbatim}
\usepackage{hyperref}
\usepackage{color}

\begin{document}

% define theorem environments
\newtheorem{theorem}{Theorem}    %[section]
\newtheorem{proposition}[theorem]{Proposition}
\newtheorem{conjecture}[theorem]{Conjecture}
\def\theconjecture{\unskip}
\newtheorem{corollary}[theorem]{Corollary}
\newtheorem{lemma}[theorem]{Lemma}
\newtheorem{sublemma}[theorem]{Sublemma}
\newtheorem{fact}[theorem]{Fact}
\newtheorem{observation}[theorem]{Observation}
\theoremstyle{definition}
\newtheorem{definition}{Definition}
\newtheorem{notation}[definition]{Notation}
\newtheorem{remark}[definition]{Remark}
\newtheorem{question}[definition]{Question}
\newtheorem{questions}[definition]{Questions}
\newtheorem{example}[definition]{Example}
\newtheorem{problem}[definition]{Problem}
\newtheorem{exercise}[definition]{Exercise}

\numberwithin{theorem}{section}
\numberwithin{definition}{section}
\numberwithin{equation}{section}

\def\reals{{\mathbb R}}
\def\torus{{\mathbb T}}
\def\heis{{\mathbb H}}
\def\integers{{\mathbb Z}}
\def\rationals{{\mathbb Q}}
\def\naturals{{\mathbb N}}
\def\complex{{\mathbb C}\/}

\def\distance{\operatorname{distance}\,}
\def\support{\operatorname{support}\,}
\def\dist{\operatorname{dist}\,}
\def\Span{\operatorname{span}\,}
\def\degree{\operatorname{degree}\,}
\def\kernel{\operatorname{kernel}\,}
\def\dim{\operatorname{dim}\,}
\def\codim{\operatorname{codim}}
\def\trace{\operatorname{trace\,}}
\def\Span{\operatorname{span}\,}
\def\dimension{\operatorname{dimension}\,}
\def\codimension{\operatorname{codimension}\,}
\def\nullspace{\scriptk}
\def\kernel{\operatorname{Ker}}
\def\ZZ{ {\mathbb Z} }
\def\p{\partial}
\def\rp{{ ^{-1} }}
\def\Re{\operatorname{Re\,} }
\def\Im{\operatorname{Im\,} }
\def\ov{\overline}
\def\eps{\varepsilon}
\def\lt{L^2}
\def\diver{\operatorname{div}}
\def\curl{\operatorname{curl}}
\def\etta{\eta}
\newcommand{\norm}[1]{ \|  #1 \|}
\def\expect{\mathbb E}
\def\bull{$\bullet$\ }
\def\det{\operatorname{det}}
\def\Det{\operatorname{Det}}
\def\multiR{\mathbf R}
\def\bestA{\mathbf A}
\def\Apq{\mathbf A_{p,q}}
\def\Apqr{\mathbf A_{p,q,r}}
\def\rank{\mathbf r}
\def\diameter{\operatorname{diameter}}

\def\essd{\operatorname{essential\ diameter}}

\newcommand{\abr}[1]{ \langle  #1 \rangle}

\def\doublesymm{\naturalGl}

\newcommand{\Norm}[1]{ \Big\|  #1 \Big\| }
\newcommand{\set}[1]{ \left\{ #1 \right\} }
\def\one{{\mathbf 1}}
\newcommand{\modulo}[2]{[#1]_{#2}}

\def\barrier{\medskip\hrule\hrule\medskip}

\def\bb{\mathbb B}
\def\br{\mathbf{r}}
\def\bt{\mathbf{t}}
\def\bE{\mathbf{E}}
\def\EE{{\mathbf E}\/}
\def\symdif{\,\Delta\,}
\def\sstar{{\dagger\star}}
\def\Star{{\bullet}}
\def\aff{\operatorname{Aff}}
\def\gl{\operatorname{Gl}}
\def\baff{\operatorname{\bf Aff}}
\newcommand{\mbf}[1]{\mathbf #1}

\def\repair{\bigskip\hrule\hrule\bigskip}

\def\scriptf{{\mathcal F}}
\def\scripts{{\mathcal S}}
\def\scriptq{{\mathcal Q}}
\def\scriptg{{\mathcal G}}
\def\scriptm{{\mathcal M}}
\def\scriptb{{\mathcal B}}
\def\scriptc{{\mathcal C}}
\def\scriptt{{\mathcal T}}
\def\scripti{{\mathcal I}}
\def\scripte{{\mathcal E}}
\def\scriptv{{\mathcal V}}
\def\scriptw{{\mathcal W}}
\def\scriptu{{\mathcal U}}
\def\scripta{{\mathcal A}}
\def\scriptr{{\mathcal R}}
\def\scripto{{\mathcal O}}
\def\scripth{{\mathcal H}}
\def\scriptd{{\mathcal D}}
\def\scriptl{{\mathcal L}}
\def\scriptn{{\mathcal N}}
\def\scriptp{{\mathcal P}}
\def\scriptk{{\mathcal K}}
\def\scriptP{{\mathcal P}}
\def\scriptj{{\mathcal J}}
\def\scriptz{{\mathcal Z}}
\def\frakv{{\mathfrak V}}
\def\frakG{{\mathfrak G}}
\def\frakA{{\mathfrak A}}
\def\frakB{{\mathfrak B}}
\def\frakC{{\mathfrak C}}

\author{Michael Christ}
\address{
        Michael Christ\\
        Department of Mathematics\\
        University of California \\
        Berkeley, CA 94720-3840, USA}
\email{mchrist@math.berkeley.edu}
\thanks{Research supported by NSF grant DMS-1363324.}

%Any opinions, findings, and conclusions
%or recommendations expressed in this paper are those of the author
%and do not necessarily reflect the views of the National Science Foundation.}
%s DMS-0401260 and

\date{May 30, 2015.}
%November 4, 2014.  

\title[Near equality  in the Riesz-Sobolev inequality]
{Near equality \\ in the Riesz-Sobolev inequality  \\ in higher dimensions}

\begin{abstract}
The Riesz-Sobolev inequality  provides an upper bound
for a trilinear expression involving convolution of indicator functions of sets.
It is known that equality holds only for homothetic ordered triples of appropriately
situated ellipsoids. 
We characterize ordered triples of subsets of Euclidean space $\reals^d$ 
that nearly realize equality, for arbitrary dimensions $d$, 
extending a result already known for $d=1$.
\end{abstract}
\maketitle

\section{Introduction}

Let $d\ge 1$.
Let $|A|$ denote the Lebesgue measure of a set $A\subset\reals^d$.
For any Lebesgue measurable set $E\subset\reals^d$ satisfying
$0\le |E|<\infty$
define $E^\Star$ to be the (nonempty) closed ball centered at $0$ 
satisfying $|E^\Star|=|E|$.
For any ordered triple $EE=(E_1,E_2,E_3)=(E_j)_{1\le j\le 3}$ of subsets of $\reals^d$ 
define $\EE^\Star= (E_j^\Star)_{1\le j\le 3}$.
Denote by $A\symdif B$ the symmetric difference of sets $A,B$.

The Riesz-Sobolev inequality \cite{liebloss} concerns the quantity 
\begin{equation} 
\scriptt(\EE) = \int _{(\reals^d)^3} \prod_{j=1}^3 \one_{E_j}(x_j)\,d\lambda(x)
\end{equation}
where $\lambda$ is the natural Lebesgue measure on $\{(x_1,x_2,x_3)\in (\reals^d)^3:
x_1+x_2+x_3=0\}$. 
It states that for any $\EE$,
\begin{equation}\label{eq:RS} \scriptt(\EE)\le \scriptt(\EE^\Star).  \end{equation}
By replacing $E_3$ by its reflection about the origin,
this can equivalently be written 
\begin{equation} \int_{E_3}\one_{E_1}*\one_{E_2} 
\le \int_{E_3^\Star}\one_{E_1^\Star}*\one_{E_2^\Star}.
\end{equation}

Burchard \cite{burchard} has characterized triples $\EE$ that achieve equality in
\eqref{eq:RS}.  An ordered triple $\br=(r_1,r_2,r_3)$ of positive real numbers
is said to be admissible if $r_k\le r_i+r_j$ for each permutation 
$(i,j,k)$ of $(1,2,3)$, and to be strictly admissible if
$r_k < r_i+r_j$ for each permutation. 
An ordered triple $\EE$ of Lebesgue measurable subsets of $\reals^d$ is said to be strictly
admissible if $(|E_j|^{1/d})_{1\le j\le 3}$ is a strictly admissible ordered
triple of positive numbers.
Burchard's characterization states that for strictly admissible ordered
triples, equality holds if and only if there exist an ellipsoid $\scripte\subset\reals^d$,
vectors $v_j\in\reals^d$ satisfying $v_1+v_2+v_3=0$, and $r_j\in\reals^+$
such that 
\begin{equation} |E_j\symdif \big(r_j \scripte+v_j\big)|=0.\end{equation}
A less restrictive characterization, involving homothetic convex sets 
in place of ellipsoids, applies in the borderline case of nonstrict admissibility,
but is not of direct relevance here.

This paper is concerned with a characterization of near equality, that is,
of triples that satisfy the reverse inequality
$\scriptt(\EE)\ge(1-\delta)\scriptt(\EE^\Star)$ with $\delta$ small.
The case $d=1$ was treated in \cite{christRS3}. Here we extend the result
obtained there to arbitrary dimensions, albeit with a less quantitative formulation.

We will work in the context of a quantitative form of strict admissibility.
Let $\tau\in(0,\infty)$.  An ordered triple $\br=(r_j)_{1\le j\le 3}$ 
of positive real numbers is said to be $\tau$--admissible if 
\begin{equation} 
r_i + r_j \ge r_k + \tau\max(r_1,r_2,r_3) \end{equation}
for all permutations $(i,j,k)$ of $(1,2,3)$.
An ordered triple $\EE$ of Lebesgue measurable subsets of $\reals^d$ 
with positive, finite measures
is $\tau$--admissible if 
$(|E_n|^{1/d})_{1\le n\le 3}$ is a $\tau$--admissible ordered
triple of positive numbers.

\begin{theorem} \label{thm:RShighermain}
Let $d\ge 1$.
For every $\tau>0$ and $\eps>0$ there exists $\delta>0$ with the following property.
Let $\EE$ be a $\tau$--admissible ordered triple of Lebesgue measurable 
subsets of $\reals^d$ with positive, finite Lebesgue measures.
Suppose that $\scriptt(\EE)\ge (1-\delta)\scriptt(\EE^\Star)$.
Then there exist an ellipsoid $\scripte\subset\reals^d$ centered at
$0$, 
along with vectors $v_j\in\reals^d$ satisfying $v_1+v_2+v_3=0$,
such that
\begin{equation} \label{eq:mainconclusion}
|E_j \symdif \big(r_j\scripte+v_j\big)|\le \eps |E_j|
\ \ \text{for each index $j$,}
\end{equation}
where $r_j = |E_j|^{1/d}|\scripte|^{-1/d}$.
\end{theorem}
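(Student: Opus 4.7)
The plan is a compactness-and-contradiction argument that reduces the $d$-dimensional stability problem to the one-dimensional stability result of \cite{christRS3} via Steiner symmetrization, concluding with Burchard's equality characterization \cite{burchard}. Suppose the theorem fails; then there exist $\tau, \eps > 0$ and a sequence $(\EE^{(n)})$ of $\tau$-admissible triples with $\scriptt(\EE^{(n)}) \ge (1 - 2^{-n})\scriptt((\EE^{(n)})^\Star)$ for which no ellipsoid centered at $0$ and vectors $v_j$ with $v_1+v_2+v_3=0$ satisfy \eqref{eq:mainconclusion} for the given $\eps$. The ratio $\scriptt(\EE)/\scriptt(\EE^\Star)$ is invariant under simultaneous linear transformation of the triple and under global dilation, so I normalize — e.g.\ by composing each $\EE^{(n)}$ with the inverse of a John ellipsoid of $E_1^{(n)}$ and rescaling — and by $\tau$-admissibility, after extracting a subsequence, I may assume $|E_j^{(n)}| \to m_j \in (0,\infty)$.

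The core tool is Steiner symmetrization $S_u$ in direction $u \in S^{d-1}$: it preserves Lebesgue measure, satisfies $(S_u E)^\Star = E^\Star$, and obeys $\scriptt(\EE) \le \scriptt(S_u \EE) \le \scriptt(\EE^\Star)$ by applying the one-dimensional Riesz-Sobolev inequality fiber by fiber. Using Fubini to decompose the defining integral along $u$ and $u^\perp$, the defect $\scriptt(S_u \EE^{(n)}) - \scriptt(\EE^{(n)})$ equals an integral over $(u^\perp)^2$ of one-dimensional defects for the fibers $\{t \in \reals : y + tu \in E_j^{(n)}\}$. Since the total defect tends to $0$, a Markov-type estimate shows that for each fixed $u$ a large proportion of fibers are one-dimensional near-extremizers, to which the stability theorem of \cite{christRS3} applies: such fibers are close in symmetric-difference measure to intervals whose centers depend measurably on the base point $y$.

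To upgrade this fiberwise information to global ellipsoidal structure, I combine conclusions across a spanning family of directions $u$ while exploiting the affine normalization to keep eccentricities bounded and the sets trapped in a fixed compact region of $\reals^d$. A compactness argument in the topology of local $L^1$ convergence of indicator functions then extracts a limit triple $\EE^{(\infty)}$ satisfying $\scriptt(\EE^{(\infty)}) = \scriptt((\EE^{(\infty)})^\Star)$. Burchard's theorem identifies $\EE^{(\infty)}$ as a triple of homothetic dilates of a single ellipsoid with centers summing to $0$. Pulling this back along the normalizing affine map produces an admissible ellipsoid witnessing \eqref{eq:mainconclusion} for $\EE^{(n)}$ when $n$ is sufficiently large, contradicting the assumed failure.

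The main obstacle is the passage from fiberwise near-interval information to global ellipsoidal structure. Knowing that almost every slice in one direction is nearly an interval does not by itself yield convexity or affine structure, and combining slice data across directions requires controlling the measurable centering maps consistently so that they assemble into the affine map $t \mapsto r_j \scripte + v_j$. A closely related difficulty is the large group of invariances: the space of ellipsoids is noncompact, so the John-ellipsoid normalization must be verified to actually produce a precompact sequence of triples, and one must also verify that the qualitative stability of \cite{christRS3} propagates through the fiber integrals without requiring a uniform modulus of continuity that is unavailable in dimension one.
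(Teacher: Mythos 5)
Your overall strategy (contradiction, symmetrization, the one-dimensional stability theorem applied fiberwise, Burchard's theorem in the limit) is in the spirit of the paper's argument, but two of the steps you describe are precisely where the real work lies, and as written they are gaps rather than proofs.

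First, the passage from ``almost every fiber in direction $u$ is nearly an interval, for a spanning family of $u$'' to ``the set is nearly an ellipsoid'' is not carried out; you correctly identify it as the main obstacle but do not resolve it, and it does not follow from any soft argument. The paper avoids this problem entirely by a staircase of partial symmetrizations combined with \emph{induction on the dimension}: it first characterizes near-extremizers satisfying $\EE=\EE^\sstar$ (these are determined by a single radial nonincreasing profile, analyzed via a dyadic decomposition in slice height, a no-gap lemma based on Corollary~\ref{cor:Lambdastrongtriangle}, and precompactness), then relaxes to $\EE=\EE^\dagger$ using the one-dimensional theorem on vertical fibers plus the approximate functional equation $\sum_j r_j c_j(x'_j)=o_\delta(1)$ for the interval centers (solved by Lemma~8.3 of \cite{christyoungest}), and finally removes all symmetry by applying the \emph{$(d-1)$-dimensional case of the theorem itself} to horizontal slices. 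Your proposal invokes only the $d=1$ result, so the ellipsoidal structure of the $(d-1)$-dimensional slices --- which is what makes the patching argument work --- is never available to you. Without the inductive input, assembling interval data over many directions into an ellipsoid is an open-ended problem, not a routine step.

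Second, the normalization and compactness are not justified. The John ellipsoid of $E_1^{(n)}$ is not defined for a general measurable set, and the John ellipsoid of its convex hull gives no control whatsoever on a set of positive measure (the hull can be vastly larger than the set), so this normalization does not produce a precompact sequence. Precompactness of the indicator functions in $L^1(\reals^d)$ --- not merely local $L^1$ convergence, which would allow mass to escape to infinity or to oscillate and would not give $\scriptt(\EE^{(\infty)})=\scriptt((\EE^{(\infty)})^\Star)$ --- is itself a substantial step: in the paper it requires first proving that after an affine renormalization the sets essentially contain a fixed ball $B(0,\eta)$ and are essentially contained in a fixed ball $B(0,\rho)$ (Lemmas~\ref{lemma:gettingthere} and~\ref{lemma:cubed}), and then combining the fiberwise interval structure with Rellich's lemma and Fourier transform bounds. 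These containment lemmas are themselves consequences of the symmetrized propositions, so they cannot be assumed at the outset. A minor further point: the remark that no uniform modulus is available in dimension one is inaccurate --- \cite{christRS3} gives the quantitative bound $\eps=O(\delta^{1/2})$ --- though this does not affect the logic.
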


The present paper is an essential step in a larger project.
%\eqref{eq:mainconclusion} is equivalent to the hypothesis in the sense that it implies 
%a weak converse inequality $\scriptt(\EE)\ge (1-o_\eps(1))\scriptt(\EE^\Star)$.
The analysis developed below
involves a compactness step, and consequently yields no information 
concerning the dependence of $\delta$ on $\eps$.
For $d=1$, the optimal dependence $\eps=O(\delta^{1/2})$ for fixed $\tau$
was proved in \cite{christRS3}. 
We intend to establish a bound of this same type for all dimensions in a future work.
While that result will formally supersede the main result of this paper,
its proof will rely on the conclusion of Theorem~\ref{thm:RShighermain}
as input, rather than enhancing or replacing the proof developed here. 

The analysis in \cite{christRS3} was based 
on an inverse theorem of additive combinatorics.
We have not been able to extend that same method of proof to higher dimensions; 
nor do we introduce here any alternative approach to the one-dimensional case.
Instead, we exploit ideas connected with symmetrization to argue by induction on
the dimension $d$,
using the one-dimensional result in a ``black box'' spirit. 
Additive combinatorial considerations do intervene, but play a secondary role 
in the induction step.
A central technique, already present in \cite{burchard} and \cite{christbmhigh}, 
is the exploitation of partially symmetrized sets intermediate
between an arbitrary $E\subset\reals^d$ and its fully symmetrized partner $E^\Star$.

\section{Notations}

Let $d\ge 1$. 
Let $\bb_d$ be the closed unit ball in $\reals^d$,
and let $\omega_d=|\bb_d|$ be its Lebesgue measure. 
%We also denote this ball by $\bb^d$ on occasion, even though it is not a Cartesian product.
Let \begin{equation} \phi(t) = (1-|t|^2)^{1/2}\ \text{for $|t|\le 1$.}\end{equation}

$\reals^+=(0,\infty)$ is the set of all strictly positive real numbers.
$\tau$--admissibility of an ordered triple $\br=(r_1,r_2,r_3)$ of positive real numbers,
and of an ordered triple ${\mathbf E}=(E_1,E_2,E_3)$ of subsets of $\reals^d$,
are defined above. 
By permuting the indices so that $r_i\le r_j\le r_k$ one finds that
if $\br$ is $\tau$--admissible,
\begin{equation} \min_{\kappa\in\{1,2,3\}}r_\kappa 
\ge\tau \max_{\kappa\in\{1,2,3\}}r_\kappa. \end{equation}

The expression $\scriptt({\mathbf E}) =
\int_{x_1+x_2+x_3=0} \prod_{j=1}^3 \one_{E_j}\,d\lambda(x)$ was likewise introduced above.
It will often be convenient to reduce to an alternative expression
which involves three sets of equal measures.
This is achieved by introducing an ordered triple $\br\in(0,\infty)^3$
and considering $\scriptt(r_1A_1,r_2A_2,r_3A_3)$
where $A_j\subset\reals^d$, $|A_1|=|A_2|=|A_3|=|\bb_d|$ and $rA=\{rx: x\in A\}$.

\begin{definition} An ordered triple $\mbf{E}$ 
of Lebesgue measurable subsets of $\reals^d$ with positive, finite measures
is a $\delta$--near extremizer of the Riesz-Sobolev inequality \eqref{eq:RS} if
\begin{equation} \scriptt(\mbf{E})\ge (1-\delta)\scriptt(\mbf{E}^\Star).\end{equation} 
\end{definition}

The symmetrization $E^\Star$ of $E\subset\reals^d$ was defined above.
Partial symmetrizations $E^\dagger,E^\star,E^\sstar = (E^\dagger)^\star$, used in the analysis, 
are defined in \S\ref{section:symms}.

We often identify $\reals^d$ with $\reals^{d-1}\times\reals^1$
and for any set $E\subset\reals^d$ and any
$x'\in\reals^{d-1}$ and $s\in\reals$ we set
\begin{align} E^{x'}&=\set{t\in\reals: (x',t)\in E} 
\\ E_{s}&=\set{y'\in\reals^{d-1}: (y',s)\in E}. \end{align}

Denote by $\pi(E)$ the projection
\begin{equation} \pi(E)=\set{x'\in\reals^{d-1}: E^{x'}\ne\emptyset}.  \end{equation}

For any $\br\in (0,\infty)^3$
consider 
\begin{align*}
S_\br&=\{(x_1,x_2,x_3)\in (\reals^d)^3: \sum_{j=1}^3 r_j x_j=0\}
\\ S^{x_1}_\br&=\{(x_2,x_3)\in (\reals^d)^2: (x_1,x_2,x_3)\in S_\br\}.
\end{align*}
There are natural measures on these sets,
defined by Lebesgue measures of projections onto $(\reals^d)^2$
and onto $\reals^d$, respectively,
using projections $(x_1,x_2,x_3)\mapsto (x_i,x_j)$ for any $i\ne j$ for $S$,
and projections $(x_2,x_3)\mapsto x_i$ for any $i\in\{2,3\}$ for $S^{x_1}$.
These measures are independent of the choices of indices. 
We denote them by $\lambda_\br$ and by $\lambda^{x_1}_\br$, respectively.
We will frequently allow the parameter $\br$ to be understood,
and will simply denote these sets and measures by $S,S^{x_1},\lambda,\lambda^{x_1}$ respectively.

Introduce the function 
\begin{equation} \Lambda_d(\gamma_1,\gamma_2,\gamma_3) = \scriptt(B_1,B_2,B_3) \end{equation}
where $B_j$ is the closed ball in $\reals^d$ centered at $0$ of measure $\gamma_j$.
In these terms, the Riesz-Sobolev inequality states that 
\begin{equation} \scriptt(E_1,E_2,E_3) \le \Lambda_d(|E_1|,|E_2|,|E_3|). \end{equation}

\begin{definition} 
$\aff(d)$ denotes the group of all invertible affine automorphisms of $\reals^d$.
These are of the form $x\mapsto T(x)=A(x)+v$ where $A$ is an invertible linear
automorphism of $\reals^d$ and $v\in\reals^d$.  \end{definition}

\begin{definition} $\baff(d)$ denotes the set of all ordered triples
${\mathbf T} = (T_1,T_2,T_3)$ of elements \[T_j(x)=A(x)+v_j(x) \]
of $\aff(d)\times\aff(d)\times\aff(d)$ where $A$ is an element of the general
linear group $\gl(d)$ that is independent of $j$, and $v_j\in\reals^d$ satisfy
\begin{equation} v_1+v_2+v_3=0.  \end{equation} \end{definition}

$\baff(d)$ is a subgroup of the product group $\aff(d)\times\aff(d)\times\aff(d)$. 
We write $\mbf{T} = (T_1,T_2,T_3)$ and
\begin{equation}{\mathbf T}({\mathbf E}) = (T_1(E_1),\,T_2(E_2),\,T_3(E_3)).\end{equation}
The trilinear form $\scriptt$ satisfies
\begin{equation}
\scriptt({\mathbf T}({\mathbf E})) = |\det(A)|^2 \, \scriptt({\mathbf E})
\end{equation}
for all ${\mathbf E}$, where $A=A_1=A_2=A_3$.
In this sense, $\baff(d)$ is a group of symmetries of $\scriptt$.
In particular, $\mbf{T}(\EE)$ is a $\delta$--near extremizer
of the Riesz-Sobolev inequality if and only if $\EE$ is so.
Likewise, $\mbf{T}(\EE)$ is $\tau$--admissible if and only if $\EE$ is so.

\section{Preparations}
Regard $\reals^d$ as $\reals^{d-k}\times\reals^k$ where $k\in\{1,2,\dots,d-1\}$.
Let $\pi:\reals^d\to\reals^{d-k}$ denote the projection onto the first factor; $\pi(x',x'')=x'$. 

\begin{lemma} \label{lemma:allplayball}
Let $\br\in(\reals^+)^3$ be strictly admissible.
For every $y_3\in \reals^{d-k}$ satisfying $|y_3|<r_3$
the set $S(y_3)$ of all $(y_1,y_2)\in\reals^{d-k}\times\reals^{d-k}$
satisfying $|y_j|<r_j$ and $y_1+y_2+y_3=0$
such that $((r_j^2-|y_j|^2)^{1/2})_{1\le j\le 3}$ is strictly admissible
is nonempty and has positive $d-k$--dimensional Lebesgue measure.
Moreover, the $d-k$--dimensional Lebesgue measure of 
$S(y_3)$ is a lower semicontinuous function of $y_3,\br$.
\end{lemma}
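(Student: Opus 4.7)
The plan is to (i) exhibit an explicit point of $S(y_3)$ by exploiting the geometry of the planar triangle with sides $r_1,r_2,r_3$, and (ii) read off the positive-measure and lower-semicontinuity claims from the fact that $S(y_3)$ is defined by open conditions.

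When $y_3=0$, the choice $y_1=y_2=0$ works, since then $\rho_j=r_j$ and strict admissibility of $\br$ gives strict admissibility of $(\rho_j)$. Assume $y_3\ne 0$. Strict admissibility of $\br$ produces a non-degenerate planar triangle with sides $r_1,r_2,r_3$ and area $K_0>0$; place the $r_3$-side along a horizontal axis and drop the altitude from the opposite vertex. This yields
\[ a=\frac{r_3^2+r_1^2-r_2^2}{2r_3},\quad b=\frac{r_3^2-r_1^2+r_2^2}{2r_3},\quad h=\frac{2K_0}{r_3}, \]
satisfying $a+b=r_3$, $a^2+h^2=r_1^2$, and $b^2+h^2=r_2^2$ regardless of the signs of $a,b$. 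Set $y_1=-(a/r_3)\,y_3$ and $y_2=-(b/r_3)\,y_3$, so $y_1+y_2+y_3=0$ automatically. The strict triangle inequalities on $\br$ force $|a|<r_1$ and $|b|<r_2$; combined with $|y_3|<r_3$ this gives $|y_j|<r_j$.

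The crucial point is strict admissibility of the induced $(\rho_j)$. Introduce $\lambda=\rho_3/r_3=\sqrt{1-|y_3|^2/r_3^2}\in(0,1]$. Using the Pythagorean identities above, direct substitution yields
\[ \rho_1^2=(a\lambda)^2+h^2,\qquad \rho_2^2=(b\lambda)^2+h^2,\qquad \rho_3=\lambda(a+b). \]
These three numbers are recognizable as the side lengths of the planar triangle with vertices $(0,0)$, $(\lambda r_3,0)$, and $(a\lambda,h)$—namely the $\br$-triangle compressed by the factor $\lambda$ in the horizontal direction. Its area is $\lambda K_0>0$, so the triangle is non-degenerate; equivalently, $(\rho_1,\rho_2,\rho_3)$ is strictly admissible.

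The conditions defining $S(y_3)$ are strict inequalities in functions continuous in the joint variables, so $S(y_3)$ is open in the $(d-k)$-dimensional affine slice $\{y_1+y_2=-y_3\}$; nonemptiness therefore yields positive $(d-k)$-dimensional Lebesgue measure. Parametrizing $S(y_3;\br)$ by $y_2$, the joint set $\{(y_2,y_3,\br):y_2\in S(y_3;\br)\}$ is open in $\reals^{d-k}\times\reals^{d-k}\times(\reals^+)^3$, and Fatou's lemma applied to its indicator gives lower semicontinuity of $|S(y_3)|$ in $(y_3,\br)$. The sole substantive hurdle is locating a point of $S(y_3)$; once the scaled-triangle picture is in place, positivity of its area is immediate and the remaining assertions reduce to routine topology.
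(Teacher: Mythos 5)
Your proof is correct, and the computations check out: the identities $a^2+h^2=r_1^2$, $b^2+h^2=r_2^2$, $a+b=r_3$ are exactly the standard altitude decomposition (equivalently, the identity $16K_0^2=4r_1^2r_3^2-(r_3^2+r_1^2-r_2^2)^2$), the verification $\rho_1^2=(a\lambda)^2+h^2$, $\rho_2^2=(b\lambda)^2+h^2$, $\rho_3=\lambda(a+b)$ is right, and the horizontally compressed triangle has area $\lambda K_0>0$, which is precisely strict admissibility of $(\rho_j)$. The structural difference from the paper is in the key existence step: the paper quotes Lemma~7.1 of Burchard as a black box for the scalar statement (existence of $s_1,s_2$ with $\sum_j s_j=0$, $|s_j|<r_j$, and $((r_j^2-s_j^2)^{1/2})_j$ strictly admissible) and then lifts it to vectors by setting $y_j=s_jy_3/s_3$, whereas you construct an explicit admissible collinear point from scratch --- in effect re-proving (a sufficient special case of) Burchard's lemma via the compressed-triangle picture. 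Both arguments then finish identically by observing that $S(y_3)$ is cut out by strict inequalities in continuous functions, though you spell out the openness-plus-Fatou argument for lower semicontinuity that the paper compresses into ``the remaining conclusions follow from continuity.'' What your route buys is self-containedness and a transparent geometric reason why the induced triple stays strictly admissible; what the paper's route buys is brevity and a direct pointer to the source of the underlying one-dimensional fact.
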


\begin{proof}
For any $s_3\in\reals$
satisfying $|s_3|<r_3$
there exist $s_1,s_2\in\reals$ 
satisfying $|s_j|<r_j$ and $\sum_{j=1}^3 s_j=0$ such that
$((r_j^2-s_j^2)^{1/2})_{1\le j\le 3}$ is strictly admissible.
This is Lemma~7.1 of Burchard \cite{burchard}.
It follows immediately that for $\br$ strictly admissible
and $y_3\in\reals^{d-1}$ satisfying $|y_3|<r_3$
there exist $y_1,y_2\in\reals^{d-1}$ such that
$|y_j|<r_j$, $\sum_{j=1}^3 y_j=0$, and 
$((r_j^2-|y_j|^2)^{1/2})_{1\le j\le 3}$ is strictly admissible;
apply the preceding statement with $s_3=|y_3|$
and set $y_j=s_j y_3/s_3$ for $j=1,2$.
The remaining conclusions follow from continuity. 
\end{proof}

Continue to express $\reals^d = \reals^{n-k}\times\reals^k$ with $0<k<d$,
with coordinates $x=(x',x'')$.

\begin{lemma}\label{lemma:paininneck}
Let $d\ge 2$ and $k\in\{1,2,\dots,d-1\}$.  
For every $\tau,\eps>0$ there exists $\eta>0$
with the following property.
For any $\tau$--admissible $\br\in(\reals^+)^3$, 
$\bb_{d-k}$ can be measurably partitioned as
$\bb_{d-k}= \scriptg\cup \scriptb$
with $|\scriptb|<\eps$ and
for every $x'_3\in\scriptg$,
\[ \lambda_{x'_3} \big\{(x'_1,x'_2): \sum_{j=1}^3 r_j x'_j=0
\text{ and } (r_j \phi(x'_j)) \text{ is $\eta$--admissible } \big\} \ge\eta.\]
\end{lemma}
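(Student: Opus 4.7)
The plan is to argue by compactness and contradiction, extending Lemma~\ref{lemma:allplayball} from strict admissibility to a uniform $\eta$-admissibility statement. Since the hypothesis and conclusion are invariant under the common scaling $\br \mapsto \alpha \br$ for $\alpha>0$, I normalize so that $\max_j r_j = 1$; then $\tau$-admissibility forces $r_j \in [\tau,1]$ for each $j$, so $\br$ lies in a compact set of strictly admissible triples. For $x_3'$ with $|x_3'|<1$ and $\eta \ge 0$ define
\[ f_\eta(x_3',\br) = \lambda_{x_3'}\big\{(x_1',x_2') : {\textstyle\sum_j}\, r_j x_j' = 0,\ (r_j \phi(x_j'))\ \text{is $\eta$-admissible}\big\}. \]
The quantity $f_\eta$ is non-increasing in $\eta$ and tends to $f_0$ as $\eta \downarrow 0$. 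After the change of variables $y_j = r_j x_j'$ (with Jacobian $\prod_j r_j^{d-k}$ bounded above and below on the normalized parameter set), Lemma~\ref{lemma:allplayball} yields $f_0(x_3',\br)>0$ for every strictly admissible $\br$ and every $|x_3'|<1$.

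Suppose the conclusion fails. Then for some $\tau,\eps>0$ there exist $\eta_n \downarrow 0$, normalized $\tau$-admissible $\br^{(n)}$, and measurable $\scriptb_n \subset \bb_{d-k}$ with $|\scriptb_n| > \eps$ on which $f_{\eta_n}(x_3',\br^{(n)}) < \eta_n$. By compactness extract a subsequence with $\br^{(n)} \to \br^*$, still $\tau$-admissible and hence strictly admissible. The analytic core is to show that $\br \mapsto f_\eta(x_3',\br)$ is lower semicontinuous at $\br^*$ for each fixed $\eta>0$ and each $|x_3'|<1$. This follows by parametrizing the constraint surface by $x_2'$, so that $x_1' = -(r_3 x_3' + r_2 x_2')/r_1$; the conditions $|x_j'|<1$ and strict $\eta$-admissibility of $(r_j\phi(x_j'))$ are open in $(x_2',\br)$, so their joint indicator is lower semicontinuous in $\br$ pointwise in $x_2'$, and Fatou's lemma delivers the semicontinuity of the integral. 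Non-strict and strict $\eta$-admissibility differ only on a null set of parameters; this is essentially the same continuity reasoning that underlies Lemma~\ref{lemma:allplayball}.

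Combining monotonicity, $f_{\eta_n}(x_3',\br^{(n)}) \ge f_\eta(x_3',\br^{(n)})$ whenever $\eta_n \le \eta$, with the semicontinuity just established, and then letting $\eta \downarrow 0$, I obtain
\[ \liminf_{n\to\infty} f_{\eta_n}(x_3',\br^{(n)}) \;\ge\; f_0(x_3',\br^*) \;>\; 0 \quad \text{for each $|x_3'|<1$.} \]
On the other hand, every $x_3' \in \limsup_n \scriptb_n$ satisfies $f_{\eta_n}(x_3',\br^{(n)}) < \eta_n \to 0$ for infinitely many $n$, so the liminf vanishes there; thus $\limsup_n \scriptb_n$ has measure zero. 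But the reverse Fatou inequality in the finite-measure space $\bb_{d-k}$ gives $|\limsup_n \scriptb_n| \ge \limsup_n |\scriptb_n| \ge \eps$, a contradiction. The main obstacle is the lower-semicontinuity step, which requires careful handling of the open-versus-closed distinction in the definition of $\eta$-admissibility and of the dependence of the measure $\lambda_{x_3'}$ on $\br$; once it is in place, the remainder of the argument is soft.
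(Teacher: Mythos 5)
Your proof is correct and follows essentially the same route as the paper, which disposes of the lemma by normalizing $\max_j r_j=1$ and invoking compactness together with the positivity and lower semicontinuity furnished by Lemma~\ref{lemma:allplayball}. The only cosmetic difference is that you run the compactness argument sequentially (with reverse Fatou on the bad sets) rather than by restricting $x_3'$ to a closed sub-ball and taking a minimum, and you correctly flag and handle the open-versus-closed issue in the definition of $\eta$--admissibility.
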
 

\begin{proof}
We may assume without loss of generality that $\max(r_1,r_2,r_3)=1$.
The set of all $\tau$--admissible $\br$ satisfying this normalization
is a compact subset of $(\reals^+)^3$.
The lemma is consequently a simple consequence of Lemma~\ref{lemma:allplayball}
by a compactness argument, working with the open unit ball in $\reals^{d-k}$
replaced by a closed ball $\{x'\in\reals^{d-k}: |x'|\le 1-\epsilon\}$
where $\epsilon$ depends on $\eps,\tau$.
\end{proof}

\begin{lemma} \label{lemma:zeroprime}
Let $d\ge 2$ and $k\in\{1,2,\dots,d-1\}$.
For any $\tau,\eps>0$ there exists $\delta>0$ with the following property.
Let $\br$ be $\tau$--admissible. 
Let  $\bE$ be an ordered triple of Lebesgue measurable subsets of $\reals^d$ satisfying
\begin{equation} \big| E_j\symdif r_j\bb_d \big| <\delta \end{equation}
for each $j\in\{1,2,3\}$.
Then there exists a partition $E_1 = \scriptg\cup \scriptb$ such that 
\begin{equation} |\scriptb|<\eps \end{equation}
and for each $y_1\in \pi(\scriptg)$,
the set $\tilde \scriptg$ of all pairs $(y_2,y_3)\in \pi(E_2)\times\pi(E_3)$
satisfying $y_1+y_2+y_3=0$ such that
$(|\scriptg_1^{y_1}|,\,|E_2^{y_2}|,\,|E_3^{y_3}|)$ is $\delta$--admissible
satisfies
\[ \lambda_{y_1}(\tilde \scriptg) \ge \delta |\pi(E_1)|.  \]
\end{lemma}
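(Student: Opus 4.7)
The plan is to combine Lemma \ref{lemma:paininneck} with Fubini-Chebyshev estimates derived from the $L^1$-closeness of each $E_j$ to $r_j\bb_d$. After rescaling I may assume $\max_j r_j=1$; $\tau$-admissibility then forces $\min_j r_j\ge\tau$. First I would apply Lemma \ref{lemma:paininneck}, with indices permuted so that $1$ plays the distinguished role, to parameters $\tau$ and $\eps/(10|\bb_d|)$, producing a threshold $\eta_0=\eta_0(\tau,\eps)>0$ and a partition $\bb_{d-k}=\scriptg^*\cup\scriptb^*$ with $|\scriptb^*|<\eps/(10|\bb_d|)$, such that for each $y_1\in r_1\scriptg^*$ the set $\Gamma(y_1)$ of pairs $(y_2,y_3)$ with $\sum_j y_j=0$ at which $\bigl((r_j^2-|y_j|^2)^{1/2}\bigr)_{j=1}^3$ is jointly $\eta_0$-admissible has $\lambda^{y_1}$-measure at least $c_0=c_0(\tau,\eps)>0$ (a factor of order $\tau^{d-k}$ arising from the change of variables between the unit-ball-normalized $x'_j$ of Lemma \ref{lemma:paininneck} and $y_j=r_j x'_j$). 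In parallel I would invoke Fubini on $|E_j\symdif r_j\bb_d|<\delta$ together with Chebyshev to extract an exceptional set $B_j\subset\reals^{d-k}$ with $|B_j|<\delta^{1/2}$, outside which $|E_j^{y_j}|=|\bb_k|(r_j^2-|y_j|^2)^{k/2}+O(\delta^{1/2})$.

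I would then set $G_1:=r_1\scriptg^*\setminus B_1$, $\scriptg:=E_1\cap\pi^{-1}(G_1)$, and $\scriptb:=E_1\setminus\scriptg$. The bound $|\scriptb|<\eps$ follows by decomposing $E_1\setminus\scriptg$ according to whether $\pi(x)$ lies in $r_1\scriptb^*$, in $B_1\cap r_1\bb_{d-k}$, or outside $r_1\bb_{d-k}$: the last contributes at most $\delta$ by Fubini, while the first two use the pointwise bound $|E_1^{y_1}|\le|\bb_k|r_1^k+\delta^{1/2}$ outside $B_1$, yielding a total at most $|\bb_d|\cdot\eps/(10|\bb_d|)+O(\delta^{1/2})<\eps$ for $\delta$ small. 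For each $y_1\in\pi(\scriptg)=G_1$, removing from $\Gamma(y_1)$ the subset where $y_2\in B_2$ or $y_3\in B_3$ costs at most $2\delta^{1/2}$ in $\lambda^{y_1}$-measure, so the remainder $\tilde\scriptg$ satisfies $\lambda^{y_1}(\tilde\scriptg)\ge c_0/2$ when $\delta<(c_0/4)^2$. On $\tilde\scriptg$ each fiber measure equals $|\bb_k|\rho_j^k+O(\delta^{1/2})$ with $\rho_j:=(r_j^2-|y_j|^2)^{1/2}$ bounded below by a positive constant depending on $\tau,\eps$; taking $k$-th roots one obtains $|E_j^{y_j}|^{1/k}=|\bb_k|^{1/k}\rho_j+O(\delta^{1/2})$, and $\eta_0$-admissibility of $(\rho_j)$ descends to $\eta_0/2$-admissibility of the $k$-th roots of the fiber measures, far stronger than $\delta$-admissibility. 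The inequality $\lambda^{y_1}(\tilde\scriptg)\ge\delta|\pi(E_1)|$ then holds because $|\pi(E_1)|\le|\bb_{d-k}|+\delta^{1/2}$ is bounded while $c_0/2$ is a fixed positive constant.

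The one conceptual subtlety is the admissibility conclusion: literal admissibility of the triple of positive numbers $(|\bb_k|\rho_j^k)$ cannot follow from $\eta_0$-admissibility of the radii $(\rho_j)$ when $k\ge 2$, since $\rho\mapsto\rho^k$ is strictly convex. This forces one to read ``$\delta$-admissible'' in the conclusion in the sense natural for $k$-dimensional sets, i.e.\ via $k$-th roots of measures, in keeping with the paper's convention for admissibility of ordered triples of sets. Once that is pinned down, the argument reduces to a straightforward coordinated tuning of $\eps$, $\eta_0$, and $\delta$.
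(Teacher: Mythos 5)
Your argument is the one the paper intends: the paper's entire proof of this lemma is the single sentence that it is a ``direct consequence of Lemma~\ref{lemma:paininneck} since $E_j$ and $r_j\bb_d$ have small symmetric difference,'' and your Fubini--Chebyshev extraction of the bad fibers $B_j$, the choice $\scriptg=E_1\cap\pi^{-1}(r_1\scriptg^*\setminus B_1)$, and the transfer of $\eta_0$--admissibility from the radii $(r_j^2-|y_j|^2)^{1/2}$ to the fiber measures is the correct fleshing-out (the lower bound on the $\rho_j$ that you need does come from the proof of Lemma~\ref{lemma:paininneck}, which works on a shrunken ball $\{|x'|\le 1-\epsilon\}$). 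Your remark that admissibility of the fiber measures must be read via $k$-th roots is also right and consistent with the paper's actual usage: the exponent $1/(d-1)$ appears explicitly in Section 7, and in Section 6 one has $k=1$, so there is no discrepancy.

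One step is literally false, however: $|\pi(E_1)|\le|\bb_{d-k}|+\delta^{1/2}$ does not follow from $|E_1\symdif r_1\bb_d|<\delta$, because a projection does not control measure. For instance $E_1$ may contain a set $A\times[0,h]^k$ of measure $|A|h^k<\delta/2$ with $|A|$ arbitrarily large, so $|\pi(E_1)|$ is unbounded, while $\lambda_{y_1}(\tilde\scriptg)$ stays bounded (on $\tilde\scriptg$ the admissibility constraint forces $|E_2^{y_2}|\ge\delta|\scriptg^{y_1}|$, whence $\lambda_{y_1}(\tilde\scriptg)\le |E_2|/(\delta|\scriptg^{y_1}|)$ by Chebyshev, and more crudely $\lambda_{y_1}(\tilde\scriptg)\le|\pi(E_2)|$ which is bounded when $E_2=r_2\bb_d$). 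So your final inequality fails for such $E_1$; indeed the lemma's literal conclusion fails too, so this is a defect of the statement rather than of your method. The repair is to read the right-hand side as $\delta|\pi(\scriptg)|$ --- your $\pi(\scriptg)$ is contained in $r_1\bb_{d-k}$ by construction, so its measure is at most $|\bb_{d-k}|$ --- or to observe that every application in the paper uses only the constant lower bound $c_0/2$. Two smaller points: the threshold for $|\scriptb^*|$ should be of the form $\eps/(10(1+|\bb_k|))$ rather than $\eps/(10|\bb_d|)$, since the fibers over $r_1\scriptb^*$ have measure up to about $|\bb_k|$ and the ratio $|\bb_k|/|\bb_d|$ is not bounded by $1$; and the normalization $\max_j r_j=1$ is not free of charge, since neither the hypothesis $|E_j\symdif r_j\bb_d|<\delta$ nor the bound $|\scriptb|<\eps$ is scale-invariant --- though this, too, is an imprecision inherited from the statement, which the paper ignores as well.
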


This is a direct consequence of Lemma~\ref{lemma:paininneck}
since $E_j$ and $r_j\bb_d$  have small symmetric difference.
\qed

The next lemma is very simple. We include a conceptual (rather than algebraic)
proof for completeness.
\begin{lemma}
Let $d\ge 1$.  Let $\alpha,\beta\in[0,\infty)^3$ and $\gamma\in(0,\infty)^3$ 
satisfy $\gamma=\alpha+\beta$, and suppose that
$\gamma$ is admissible.  Then 
\begin{equation} \Lambda_d(\mbf{\alpha})+\Lambda_d(\mbf{\beta})\le \Lambda_d(\mbf{\gamma}).\end{equation}
Moreover, if  $\Lambda_d(\mbf{\alpha})+\Lambda_d(\mbf{\beta}) = \Lambda_d(\mbf{\gamma})$
then $\mbf{\alpha}=0$ or $\mbf{\beta}=0$.
\end{lemma}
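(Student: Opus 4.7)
The plan is to realize $\Lambda_d(\mbf\alpha)+\Lambda_d(\mbf\beta)$ as the value of $\scriptt$ on a single ordered triple, and then apply Riesz-Sobolev to that triple. I would pick vectors $v_1,v_2,v_3\in\reals^d$ with $v_1+v_2+v_3=0$ and each $|v_j|$ larger than $\sum_k(r_k^\alpha+r_k^\beta)$, where $r_k^\alpha:=(\alpha_k/\omega_d)^{1/d}$ and $r_k^\beta:=(\beta_k/\omega_d)^{1/d}$. Let $B_j^\alpha$ be the closed ball of measure $\alpha_j$ centered at the origin, let $B_j^\beta$ be the closed ball of measure $\beta_j$ centered at $v_j$, and set $E_j:=B_j^\alpha\cup B_j^\beta$. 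These two balls are disjoint, so $|E_j|=\gamma_j$; moreover $E_j^\Star$ is the ball of measure $\gamma_j$ centered at the origin. The Riesz-Sobolev inequality thus gives $\scriptt(E_1,E_2,E_3)\le\Lambda_d(\mbf\gamma)$.

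Next I would expand $\scriptt(E_1,E_2,E_3)$ by multilinearity using $\one_{E_j}=\one_{B_j^\alpha}+\one_{B_j^\beta}$, producing a sum of eight terms $\scriptt(B_1^{\sigma_1},B_2^{\sigma_2},B_3^{\sigma_3})$ indexed by $\sigma\in\{\alpha,\beta\}^3$. The $\sigma\equiv\alpha$ term equals $\Lambda_d(\mbf\alpha)$; the $\sigma\equiv\beta$ term equals $\Lambda_d(\mbf\beta)$, by translation invariance of $\scriptt$ under the element of $\baff(d)$ with translations $-v_j$ (whose sum vanishes). For any non-constant $\sigma$, the support conditions $x_j\in B_j^{\sigma_j}$ combined with $x_1+x_2+x_3=0$ force $|\sum_{j:\sigma_j=\beta}v_j|\le\sum_k(r_k^\alpha+r_k^\beta)$; but $\sum_{j\in S}v_j=\pm v_{j_0}$ for some $j_0$ whenever $S$ is a proper non-empty subset of $\{1,2,3\}$ (using $v_1+v_2+v_3=0$), and each $|v_{j_0}|$ exceeds that bound by construction. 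Every mixed term therefore vanishes, so $\scriptt(E_1,E_2,E_3)=\Lambda_d(\mbf\alpha)+\Lambda_d(\mbf\beta)$, and combining with Riesz-Sobolev yields the desired inequality.

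For the equality clause, suppose $\Lambda_d(\mbf\alpha)+\Lambda_d(\mbf\beta)=\Lambda_d(\mbf\gamma)$. Then $(E_1,E_2,E_3)$ achieves equality in Riesz-Sobolev, so Burchard's theorem—either the ellipsoid version in the strict case or its convex-set refinement in the borderline case—forces each $E_j$ to coincide modulo null sets with a homothetic copy of a common convex set. But $E_j$ is a disjoint union of two balls separated by positive distance, which is disconnected and therefore non-convex unless one of the two balls is empty; so for every $j$ at least one of $\alpha_j,\beta_j$ must be zero. If both types occurred—say $\alpha_{j_1}=0$ for some $j_1$ and $\beta_{j_2}=0$ for some $j_2$—then both $\Lambda_d(\mbf\alpha)$ and $\Lambda_d(\mbf\beta)$ would have a zero entry and so vanish, contradicting the equality assumption, since $\Lambda_d(\mbf\gamma)>0$ for admissible $\mbf\gamma\in(0,\infty)^3$. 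Hence all indices are of the same type, giving $\mbf\beta=0$ or $\mbf\alpha=0$.

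The main (mild) obstacle is the support-based vanishing of the mixed terms in the second step; the inequality and its equality analysis then both follow essentially formally from Riesz-Sobolev and Burchard.
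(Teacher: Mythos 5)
Your proposal is correct and follows essentially the same route as the paper: form $E_j$ as a union of a ball of measure $\alpha_j$ at the origin and a ball of measure $\beta_j$ at a widely separated center $v_j$ with $\sum_j v_j=0$, expand $\scriptt(E_1,E_2,E_3)$ by multilinearity, kill the mixed terms by a support argument, and invoke Burchard's equality theorem plus the disconnectedness of $E_j$ for the equality clause. Your write-up is in fact slightly more explicit than the paper's (quantified separation of the centers, the $\pm v_{j_0}$ computation for the mixed terms, and the remark that the borderline admissible case needs the convex-set form of Burchard's theorem), but no new idea is involved.
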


\begin{proof}
Choose any three distinct nonzero points $y_j\in\reals^d$ satisfying $\sum_{j=1}^3 y_j=0$.
Let $z_j=ry_j$ where $r>0$ is a large quantity to be chosen below.
Let $B'_j$ be balls centered at $0$ of measures $\alpha_j$
and let $B''_j$ be balls centered at $z_j\in\reals^d$ of measures $\beta_j$.
Then
$\scriptt(B'_1,B'_2,B'_3)=\Lambda_d(\mbf{\alpha})$, and
$\scriptt(B''_1,B''_2,B''_3)=\Lambda_d(\mbf{\beta})$.

Set $E_j=B'_j\cup B''_j$. 
By expressing $\one_{E_j} = \one_{B'_j}+\one_{B''_j}$,
invoking multilinearity of $\scriptt$ and expanding, we express
$\scriptt(E_1,E_2,E_3)$ as a sum of eight terms, each of
which equals $\scriptt(A_1,A_2,A_3)$ where $A_j$ is either $B'_j$ or $B''_j$.
If $r$ is chosen to be sufficiently large then all but two of these terms vanish, leaving
\[\Lambda_d(\gamma)\ge \scriptt(E_1,E_2,E_3) =\scriptt(B'_1,B'_2,B'_3) +\scriptt(B''_1,B''_2,B''_3)
= \Lambda_d(\alpha)+\Lambda_d(\beta).\]

If $\Lambda_d(\mbf{\alpha})+\Lambda_d(\mbf{\beta})
= \Lambda_d(\mbf{\gamma})$ then this chain of inequalities forces
$\scriptt(E_1,E_2,E_3) = \Lambda_d(\mbf{\gamma})$.
By Burchard's theorem,
each of the sets $E_j$ differs from some convex set by a set of Lebesgue measure zero.
But for large $r$, $B'_j\cap B''_j=\emptyset$. This forces either $B'_j$ or $B''_j$ to have radius zero.

It must be that $|B'_j|=0$ for all $j$, or $|B''_j|=0$ for all $j$.
For if some $B'_j$ and $B''_i$ both have measure zero then both $\scriptt(B'_1,B'_2,B'_3)$
and $\scriptt(B''_1,B''_2,B''_3)$ vanish, contradicting the assumption of equality
since $\Lambda_d(\mbf{\gamma})>0$.
\end{proof}

\begin{corollary} \label{cor:Lambdastrongtriangle}
Let $d\ge 1$ and $\tau,\eta>0$.
There exists $\rho>0$ such that
for any $\tau$--admissible ordered triple ${\gamma}\in(0,\infty)^3$
and any decomposition ${\gamma} = {\alpha}+{\beta}$
with ${\alpha},{\beta}\in[0,\infty)^3$
and 
\begin{align}
\max_{1\le j\le 3} \alpha_j \ge \eta \min_{1\le i\le 3}\gamma_i
\ \text{ and }\  \max_{1\le j\le 3} \beta_j \ge \eta \min_{1\le i\le 3}\gamma_i,
\end{align}
there is a uniformly strict inequality
\begin{equation}
\Lambda_d({\alpha})+\Lambda_d({\beta}) \le (1-\rho) \Lambda_d({\gamma}).
\end{equation}
\end{corollary}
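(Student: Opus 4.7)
The plan is a homogeneity-plus-compactness argument based on the immediately preceding lemma. First I would record that $\Lambda_d$ is continuous on $[0,\infty)^3$ and satisfies the scaling identity
\begin{equation*}
\Lambda_d(\mu^d\gamma_1,\mu^d\gamma_2,\mu^d\gamma_3) \;=\; \mu^{2d}\,\Lambda_d(\gamma_1,\gamma_2,\gamma_3)
\qquad (\mu>0),
\end{equation*}
obtained by the substitution $x_j\mapsto\mu^{-1}x_j$ in the defining integral together with the fact that the natural measure on $\{x_1+x_2+x_3=0\}\subset(\reals^d)^3$ is $2d$-dimensional. In particular $\Lambda_d$ is positively homogeneous of degree $2$, so it suffices to prove the claim under the normalization $\max_j\gamma_j=1$, with $\rho$ chosen uniformly on this slice.

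Under this normalization, $\tau$-admissibility of $\gamma$ forces $\gamma\in[\tau,1]^3$ and $\min_i\gamma_i\ge\tau$. The hypothesis $\max_j\alpha_j\ge\eta\min_i\gamma_i$ then becomes $\max_j\alpha_j\ge\eta\tau$, and analogously for $\beta$. Let $K$ be the set of triples $(\alpha,\beta,\gamma)\in\reals^9$ such that $\gamma\in[\tau,1]^3$ is $\tau$-admissible with $\max_j\gamma_j=1$, $\alpha,\beta\ge 0$, $\alpha+\beta=\gamma$, and $\max_j\alpha_j\ge\eta\tau$, $\max_j\beta_j\ge\eta\tau$. Each defining condition is closed, and the ambient set is bounded, so $K$ is compact.

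On $K$, consider the continuous function
\begin{equation*}
G(\alpha,\beta,\gamma) \;=\; 1 \;-\; \frac{\Lambda_d(\alpha)+\Lambda_d(\beta)}{\Lambda_d(\gamma)}.
\end{equation*}
The denominator is bounded below on $K$ by $\Lambda_d(\tau,\tau,\tau)>0$ (using monotonicity of $\scriptt$ in each of its arguments), so $G$ is well-defined and continuous. The preceding lemma gives $G\ge 0$ pointwise on $K$, with equality forcing $\alpha=0$ or $\beta=0$; both possibilities are excluded on $K$ by the strict lower bounds $\max_j\alpha_j\ge\eta\tau>0$ and $\max_j\beta_j\ge\eta\tau>0$. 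Hence $G$ is strictly positive on $K$ and attains a positive minimum $\rho>0$ by compactness, giving the desired inequality $\Lambda_d(\alpha)+\Lambda_d(\beta)\le(1-\rho)\,\Lambda_d(\gamma)$ on $K$. This inequality then transfers to arbitrary $\tau$-admissible $\gamma$ (without the normalization) via the scaling identity, since both sides scale identically under the dilation that sends $\max_j\gamma_j$ to $1$.

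There is no substantive obstacle in this plan; the only nontrivial input is the strict-inequality clause of the preceding lemma (which rests in turn on Burchard's characterization of equality in the Riesz-Sobolev inequality), and this is exactly what is needed to exclude the locus $\{G=0\}$ from the compact set $K$.
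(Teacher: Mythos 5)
Your argument is correct and is essentially the paper's proof: the same normalization $\max_j\gamma_j=1$ via homogeneity of $\Lambda_d$, the same compact set $K$ of admissible decompositions, and the same appeal to the strict-inequality clause of the preceding lemma to get a positive minimum by compactness. The only cosmetic difference is that you work with the ratio $G$ directly, which folds in the paper's final step of converting a uniform additive gap into the multiplicative factor $(1-\rho)$.
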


\begin{proof}
Let $d,\tau,\eta$ be given. 
Because the hypotheses and conclusions are invariant under multiplication
of all $\alpha_i,\beta_j,\gamma_k$ by any common  positive constant, we
may assume without loss of generality that $\max_{1\le i\le 3}\gamma_i=1$.

Consider the set $K$ of all ordered triples 
$(\gamma,\alpha,\beta)\in[0,\infty)^9$ satisfying the hypotheses
such that $\max_{1\le i\le 3} \gamma_i=1$. 
Since $K$ is defined by finitely many linear inequalities and equalities, it is closed.
It is compact by the assumption of $\tau$--admissibility of ${\gamma}$.
The function
$\Lambda_d({\gamma}) - \Lambda_d({\alpha}) - \Lambda_d({\beta})$
is continuous on $K$, and vanishes nowhere on $K$ by the preceding lemma.
Therefore 
$\Lambda_d(\alpha)+\Lambda_d(\beta)-\Lambda_d(\gamma)\le -\rho'$ for some constant $\rho'<0$.
Since $\Lambda_d(\gamma)$ is bounded above,
$-\rho'\le -\rho\Lambda_d(\gamma)$ provided that $\rho$ is sufficiently small.
\end{proof}

\section{Symmetrizations}\label{section:symms}

Regard $\reals^d$ as the Cartesian product $\reals^{d-1}\times\reals^1$
with coordinates $x=(x',x_d)$.

\begin{definition} 
Let $E\subset\reals^d$ be a Lebesgue measurable set.
\begin{enumerate}
\item
If $|E|>0$,
$E^\Star$ denotes the closed ball in $\reals^d$,
centered at $0$, that satisfies $|E^\Star|=|E|$;
$E^\Star=\reals^d$ if $|E|=\infty$. 
If $|E|=0$ , $E=\emptyset$.

\item
$E^\dagger$ denotes the Schwarz symmetrization of $E$
with respect to the first $d-1$ coordinates. 
For $t\in\reals$ define $r(t)$ by
$\omega_{d-1} r(t)^{d-1} = |\{y\in\reals^{d-1}: (y,t)\in E\}|$.
Then
$E^\dagger$ is the set of all $(x',x_d)\in\reals^{d-1}\times\reals^1$
such that $r(x_d)>0$ and $|x'|\le r(x_d)$.
For an ordered triple $\bE=(E_j: 1\le j\le 3)$
we define $bE^\star = (E_j^\star: 1\le j\le 3)$
and analogously define $bE^\dagger$ and $bE^\sstar$.

\item
$E^\star$ denotes the Steiner symmetrization of $E$
with respect to the last coordinate. That is, $E^\star$ is
the set of all $(x',t)\in\reals^{d-1}\times \reals^1$ 
such that $|E^{x'}|>0$ and $|t|\le \tfrac12 |E^{x'}|$,
where $|E^{x'}|$ is the one-dimensional Lebesgue measure of
$\{t\in\reals: (x',t)\in E\}$.

\item
$E^{\sstar} = (E^\dagger)^\star$.
\end{enumerate}
\end{definition}

Each of these symmetrizations is defined to be empty for any set of Lebesgue measure zero.
In general, they satisfy
\begin{equation} |E^\Star|=|E^\star|=|E^\dagger|=|E^\sstar|=|E|.  \end{equation}

Schwarz symmetrization has a key monotonicity property:
For any set $E\subset\reals^d$, the function of $y\in\reals^{d-1}$ defined to be
$|(E^\dagger)^y|$ is a function of $|y|$ alone, and moreover is a nonincreasing function.
Likewise, for Steiner symmetrization,
$\reals\owns t\mapsto |\{y: (y,t)\in E^\star \}|$ is an even function
which is nonincreasing on $[0,\infty)$.

Define \begin{equation} |{\EE}|  = \max_{1\le j\le 3} |E_j|. \end{equation}

\begin{lemma}
For any Lebesgue measurable sets $E_j\subset\reals^d$ with finite measures,
\begin{align}
& \scriptt(\mbf{E}) \le \scriptt(\mbf{E}^\star) \label{eq:steinersymm}
\\& \scriptt(\mbf{E}) \le \scriptt(\mbf{E}^\dagger) \label{eq:schwarzsymm}
\\& \scriptt(\mbf{E}) \le \scriptt(\mbf{E}^\sstar). \label{eq:doublesymm}
\end{align}
\end{lemma}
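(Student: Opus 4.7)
The plan is to reduce each inequality to a lower-dimensional instance of the Riesz-Sobolev inequality \eqref{eq:RS} by Fubini, exploiting that the affine constraint $x_1+x_2+x_3=0$ in $(\reals^d)^3$ decouples under the splitting $\reals^d = \reals^{d-1}\times\reals^1$. Writing $x_j=(x'_j,t_j)$, the condition $\sum_j x_j=0$ factors as $\sum_j x'_j=0$ and $\sum_j t_j=0$, and the surface measure $\lambda$ factors as the product of the corresponding measures on $\{\sum x'_j=0\}\subset(\reals^{d-1})^3$ and $\{\sum t_j=0\}\subset\reals^3$.

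For the Steiner inequality \eqref{eq:steinersymm}, I would apply Fubini to write
\[
\scriptt(\mbf{E}) = \int_{\sum x'_j=0}\Bigl(\int_{\sum t_j=0}\prod_{j=1}^3 \one_{E_j^{x'_j}}(t_j)\,dt\Bigr)\,dx'.
\]
For each fixed ordered triple $(x'_1,x'_2,x'_3)$ with $\sum x'_j=0$, the inner integral is the one-dimensional trilinear form on the vertical slices $E_j^{x'_j}\subset\reals$. Invoking the one-dimensional Riesz-Sobolev inequality on each such inner integral and noting that the one-dimensional symmetric interval about $0$ with measure $|E_j^{x'_j}|$ is, by definition of Steiner symmetrization, precisely the vertical slice $(E_j^\star)^{x'_j}$, the slicewise inequality integrates in $x'$ to yield \eqref{eq:steinersymm}.

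For the Schwarz inequality \eqref{eq:schwarzsymm}, I would instead reverse the order of integration and write
\[
\scriptt(\mbf{E}) = \int_{\sum t_j=0}\Bigl(\int_{\sum x'_j=0}\prod_{j=1}^3 \one_{(E_j)_{t_j}}(x'_j)\,dx'\Bigr)\,dt.
\]
The inner integral is now the $(d-1)$-dimensional trilinear form applied to the horizontal slices $(E_j)_{t_j}\subset\reals^{d-1}$. Applying the $(d-1)$-dimensional Riesz-Sobolev inequality \eqref{eq:RS} to each inner integral, and observing that $((E_j)_{t_j})^\Star$ in $\reals^{d-1}$ coincides, by the definition of Schwarz symmetrization, with the horizontal slice $(E_j^\dagger)_{t_j}$, integration in $t$ yields \eqref{eq:schwarzsymm}.

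Finally, \eqref{eq:doublesymm} is immediate from the composition: apply \eqref{eq:schwarzsymm} to $\mbf{E}$ and then \eqref{eq:steinersymm} to $\mbf{E}^\dagger$, giving $\scriptt(\mbf{E})\le\scriptt(\mbf{E}^\dagger)\le\scriptt((\mbf{E}^\dagger)^\star)=\scriptt(\mbf{E}^\sstar)$. There is no real obstacle; the only point deserving care is the verification that slicing commutes with the corresponding rearrangements in the precise sense used, which is built into the definitions of $E^\star$ and $E^\dagger$. The argument is the standard one used (e.g., in \cite{burchard}) to reduce symmetrization inequalities in $\reals^d$ to their one- and $(d-1)$-dimensional counterparts.
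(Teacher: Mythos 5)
Your proof is correct and follows essentially the same route as the paper, which simply cites the standard slicing argument ("applying the Riesz-Sobolev inequality to parallel one-dimensional slices \dots see for instance \cite{burchard}") for \eqref{eq:steinersymm} and \eqref{eq:schwarzsymm} and then composes the two for \eqref{eq:doublesymm}; you have merely written out the Fubini factorization explicitly. Your order of composition (Schwarz first, then Steiner applied to $\mbf{E}^\dagger$) is the one consistent with the definition $E^\sstar=(E^\dagger)^\star$, so no issues there.
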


\begin{proof}
Inequality \eqref{eq:steinersymm} is obtained by applying the Riesz-Sobolev inequality
to parallel one-dimensional slices of $\reals^d$ in a well-known manner; see for instance
\cite{burchard}.  Inequality \eqref{eq:schwarzsymm} is 
obtained in the same way by working with parallel $d-1$--dimensional slices.
The final inequality is obtained by applying first \eqref{eq:steinersymm}, then \eqref{eq:schwarzsymm}.
\end{proof}

An immediate consequence of the preceding lemma is:
\begin{corollary}
If $\mbf{E}$ is a $\delta$--near extremizer of the Riesz-Sobolev inequality then
$\mbf{E}^\star$, $\mbf{E}^\dagger$, and $\mbf{E}^\sstar$ are also $\delta$--near extremizers.
\end{corollary}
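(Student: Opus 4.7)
The plan is direct and essentially mechanical: chain the preceding lemma's symmetrization inequalities together with the hypothesis, after observing that the partial symmetrizations preserve Lebesgue measure componentwise.

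First I would record the basic measure-preservation property built into the definitions in Section~\ref{section:symms}: for any Lebesgue measurable $E\subset\reals^d$ with finite measure, $|E^\star|=|E^\dagger|=|E^\sstar|=|E|$. Consequently, applying $\Star$ to each component of $\mbf{E}^\star$, $\mbf{E}^\dagger$, or $\mbf{E}^\sstar$ produces exactly the same ordered triple of centered balls as applying $\Star$ directly to $\mbf{E}$. That is,
\begin{equation}
(\mbf{E}^\star)^\Star = (\mbf{E}^\dagger)^\Star = (\mbf{E}^\sstar)^\Star = \mbf{E}^\Star.
\end{equation}
This identification is the only substantive point; everything else is a one-line chain of inequalities.

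Next, to handle $\mbf{E}^\star$, I would combine the hypothesis $\scriptt(\mbf{E})\ge (1-\delta)\scriptt(\mbf{E}^\Star)$ with inequality \eqref{eq:steinersymm} from the preceding lemma, obtaining
\begin{equation}
\scriptt(\mbf{E}^\star) \ge \scriptt(\mbf{E}) \ge (1-\delta)\,\scriptt(\mbf{E}^\Star) = (1-\delta)\,\scriptt\bigl((\mbf{E}^\star)^\Star\bigr),
\end{equation}
which is exactly the statement that $\mbf{E}^\star$ is a $\delta$--near extremizer. The cases of $\mbf{E}^\dagger$ and $\mbf{E}^\sstar$ are handled identically, using \eqref{eq:schwarzsymm} and \eqref{eq:doublesymm} in place of \eqref{eq:steinersymm}.

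There is no real obstacle here; the only minor care required is to note that the conventions for sets of measure zero (each symmetrization is set to be empty) are consistent with the identity $(\mbf{E}^\bullet)^\Star=\mbf{E}^\Star$, and that the hypothesis implicitly requires $|E_j|$ to be positive and finite so that $\mbf{E}^\Star$ and $\scriptt(\mbf{E}^\Star)>0$ make sense.
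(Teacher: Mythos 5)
Your proof is correct and follows exactly the route the paper intends when it calls this an immediate consequence of the preceding lemma: the key observation that measure preservation gives $(\mbf{E}^\star)^\Star=(\mbf{E}^\dagger)^\Star=(\mbf{E}^\sstar)^\Star=\mbf{E}^\Star$, followed by chaining the hypothesis with \eqref{eq:steinersymm}, \eqref{eq:schwarzsymm}, and \eqref{eq:doublesymm}. Nothing further is needed.
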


We often regard two measurable sets as identical if their symmetric difference is a Lebesgue null set
of the appropriate dimension.
Let $E\subset\reals^d$ be any Lebesgue measurable set with $0<|E|<\infty$.
$|E\symdif E^\star|=0$ if and only if for almost every $y\in\reals^{d-1}$,
$E^y$ is a Lebesgue null set or differs from an interval centered at $0\in\reals^1$ 
by a one-dimensional Lebesgue null set.
Likewise,
$|E\symdif E^\dagger|=0$ if and only if for almost every $t\in\reals^{1}$,
$\{y\in\reals^{d-1}: (y,t)\in E\}$ is a $d-1$--dimensional Lebesgue null set,
or differs from  ball centered at $0\in\reals^{d-1}$ by a $d-1$--dimensional Lebesgue null set.
A consequence is that
$(E^\star)^\star=E^\star$ and $(E^\dagger)^\dagger = E^\dagger$; that is, 
their respective symmetric differences are to $d$--dimensional Lebesgue null sets.
More is true:

\begin{lemma} \label{lemma:idempotent}
For any Lebesgue measurable set $E\subset\reals^d$ with $0<|E|<\infty$,
\begin{equation} (E^\sstar)^\dagger = E^\sstar 
\ \text{ and }\   E^\sstar = (E^\sstar)^\sstar.  \end{equation} \end{lemma}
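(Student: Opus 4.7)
The plan is to unpack the definitions and track the two kinds of symmetry that $E^\sstar$ inherits from its construction as $(E^\dagger)^\star$. Set $F=E^\dagger$; by definition of Schwarz symmetrization there exists a measurable function $r:\reals\to[0,\infty)$, given by $\omega_{d-1}r(t)^{d-1}=|\{y\in\reals^{d-1}:(y,t)\in E\}|$, such that $F=\{(y,t):|y|\le r(t)\}$ (up to a null set). Introduce the distribution function $\mu(s)=|\{t\in\reals:r(t)\ge s\}|$ for $s\ge 0$; it is nonincreasing in $s$ and finite, by Fubini and the assumption $|E|<\infty$. The vertical slice of $F$ is
\[ F^{x'}=\{t:r(t)\ge|x'|\}, \qquad |F^{x'}|=\mu(|x'|),\]
which depends only on $|x'|$. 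This is the single observation that drives everything else.

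Next I would describe $E^\sstar=F^\star$ explicitly. By definition of Steiner symmetrization in the last coordinate,
\[ (E^\sstar)^{x'} \text{ is the interval centered at } 0 \text{ of length } |F^{x'}|=\mu(|x'|),\]
so $(x',t)\in E^\sstar$ iff $2|t|\le \mu(|x'|)$. From this formula I can read off both the Schwarz and Steiner symmetry of $E^\sstar$. For the horizontal slices,
\[ (E^\sstar)_t=\{y'\in\reals^{d-1}:\mu(|y'|)\ge 2|t|\};\]
since $\mu$ is nonincreasing, this set is of the form $\{|y'|\le R(t)\}$ for some $R(t)\ge 0$, i.e.\ a closed ball in $\reals^{d-1}$ centered at $0$. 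By the slice-wise characterization of Schwarz symmetrization recorded just before the lemma, this gives $(E^\sstar)^\dagger=E^\sstar$ up to a null set, which is the first claimed equality.

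For the second equality I would simply compose: $(E^\sstar)^\sstar=((E^\sstar)^\dagger)^\star=(E^\sstar)^\star$, and since each vertical slice $(E^\sstar)^{x'}$ is already an interval centered at the origin, Steiner symmetrization leaves it unchanged, so $(E^\sstar)^\star=E^\sstar$. The only technical points requiring care are measurability (for the existence of $r(t)$ and hence of $\mu$, which follow at once from Fubini and the measurability of $E$) and treating equalities ``up to a null set,'' matching the convention stated in the paragraph preceding the lemma. There is no substantive obstacle; the proof is essentially the observation that Schwarz symmetrization in $y$ and Steiner symmetrization in $t$ commute on a set whose slice-length in $t$ depends only on $|y|$ and is monotone.
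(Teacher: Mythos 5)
Your proof is correct and takes essentially the same route as the paper's: both describe $E^\sstar$ explicitly as $\{(y,t): 2|t|\le \mu(|y|)\}$ with $\mu$ nonincreasing, read off that the horizontal slices are balls centered at the origin (giving $(E^\sstar)^\dagger=E^\sstar$), and then obtain the second identity from the first together with idempotence of Steiner symmetrization. The only difference is that you verify the monotonicity of $\mu$ explicitly via the distribution function, where the paper simply cites the previously stated monotonicity property of Schwarz symmetrization.
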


\begin{proof}
By its definition and by virtue of the monotonicity property of Schwarz symmetrization, 
$E^\sstar=\{(y;t): |t|\le \tfrac12 \phi(|y|)\}$ (up to a Lebesgue null set)
where $\phi:[0,\infty)\to [0,\infty]$ is nonincreasing.
From this it is apparent that for each $t$,
$\{y\in\reals^{d-1}: (y;t)\in E^\sstar\}$ is a ball 
centered at $0\in\reals^{d-1}$, or is the empty set, or is all of $\reals^{d-1}$.
Consequently 
$(E^{\sstar})^\dagger = E^\sstar$. 
The second conclusion follows since $(A^\sstar)^\sstar=A^\sstar$ for any set $A$.
\end{proof}

In the analysis below,
we seek to extract information about ${\EE}$
from the hypothesis that ${\EE}$ is a near-extremizer of the Riesz-Sobolev inequality.
One of the leading ideas is that partial symmetrizations  such as
${\EE}^\star$ and ${\EE}^\sstar$ 
enjoy enhanced regularity which make it easier to extract
information from their status as near-extremizers;
yet they also depend on ${\EE}$ in such a way that conclusions about them provide 
useful information about ${\EE}$ itself.
In contrast, the full symmetrization $E^\Star$ retains
no information about $E$, except for the value of $|E|$.
Given $\EE$, we will first study $\EE^\sstar$,
then will use information gleaned about $\EE^\sstar$
to gain information about $\EE^\dagger$,
and finally will use this information to study $\EE$.

\section{Structure of doubly symmetric near-extremizers} 
Recall that $\EE^\sstar$ is obtained from $\EE=(E_1,E_2,E_3)$
by applying first Schwarz symmetrization, then Steiner symmetrization, to each of the three sets $E_j$.
$\mbf{A} = \EE^\sstar$ is doubly symmetric; it satisfies both $A^\dagger=A$
and $A^\star=A$.
% satisfies $\mbf{A}^\sstar = \mbf{A}$.

\begin{proposition} \label{prop:symmetrizedcompactness}
Let $d\ge 2$ and $\tau>0$.
For any $\eps>0$ there exists $\delta>0$ with the following property.
Let ${\mathbf E}$ be any 
$\tau$--admissible ordered triple of Lebesgue measurable subsets of $\reals^d$
that is a $(1-\delta)$--near extremizer of the Riesz-Sobolev inequality 
and satisfies 
$\mbf{E}=\mbf{E}^\sstar$.
There exist an ellipsoid $\scripte\subset\reals^d$
and $\br\in[0,\infty)^3$ such that
\begin{equation} \big|\, E_j\symdif r_j \scripte \,\big|<\eps |{\mathbf E}|
\ \text{ for all } j\in\{1,2,3\}.  \end{equation}
\end{proposition}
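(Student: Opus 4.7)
The plan is to argue by contradiction and compactness, reducing in the limit to Burchard's characterization of equality. Assume the conclusion fails for some $\eps_0 > 0$; then one extracts a sequence $\EE_n$ of $\tau$-admissible doubly symmetric triples with $\scriptt(\EE_n) \ge (1-1/n)\scriptt(\EE_n^\Star)$, for which no ellipsoid $\scripte$ and radii $\br$ simultaneously satisfy $|E_{n,j} \symdif r_j\scripte| \le \eps_0 |\EE_n|$ for all $j$. The aim is to renormalize the $\EE_n$, extract a convergent subsequence, identify the limit as a genuine extremizer, and invoke Burchard.

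For the renormalization, recall that each $E_{n,j} = E_{n,j}^\sstar$ takes the form $\{(y,t) \in \reals^{d-1}\times\reals : |t| \le \tfrac12\phi_{n,j}(|y|)\}$ with $\phi_{n,j}:[0,\infty)\to[0,\infty)$ nonincreasing, by Lemma \ref{lemma:idempotent}. The subgroup of $\baff(d)$ preserving the doubly symmetric class consists of diagonal maps $(y,t)\mapsto(\lambda y,\mu t)$ applied simultaneously to all three sets, giving two real parameters. I would exploit this freedom to pin down both a horizontal and a vertical scale of $E_{n,1}$---for instance, to force $|E_{n,1}|=\omega_d$ and to make the horizontal and vertical quantiles of $E_{n,1}$ at level $\tfrac12|E_{n,1}|$ equal.

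The delicate preparatory step is then to derive uniform two-sided bounds on the horizontal and vertical extents of $E_{n,2}$ and $E_{n,3}$ after this normalization. By $\tau$-admissibility the measures $|E_{n,j}|$ are mutually comparable, so lower bounds follow once upper bounds are in place; for the upper bounds I would combine the near-extremality hypothesis with a Brunn--Minkowski type argument, since an unboundedly elongated $E_{n,j}$ would make the trilinear configuration too dispersed to saturate $\scriptt(\EE_n^\Star)$, and the strong triangle inequality of Corollary \ref{cor:Lambdastrongtriangle} can be used to quantify the loss incurred if mass escapes in one direction. This is the main obstacle of the proof: without such a priori bounds the limiting profiles could be degenerate (supported in a lower-dimensional slice or spread to infinity), rendering Burchard's theorem inapplicable.

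Once the bounds are in place, the profiles $\phi_{n,j}$ are uniformly bounded and supported in uniformly bounded intervals, and Helly's selection theorem yields a subsequence along which $\phi_{n,j}\to\phi_{\infty,j}$ almost everywhere; defining $E_{\infty,j}$ from $\phi_{\infty,j}$ gives $\one_{E_{n,j}}\to\one_{E_{\infty,j}}$ in $L^1(\reals^d)$. Continuity of $\scriptt$ then forces $\scriptt(\EE_n)\to\scriptt(\EE_\infty)$ and $\scriptt(\EE_n^\Star)\to\scriptt(\EE_\infty^\Star)$, so $\EE_\infty$ is a genuine extremizer that is still $\tau'$-admissible for some $\tau'>0$. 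Burchard's theorem then identifies each $E_{\infty,j}$, up to a null set, with $r_j\scripte+v_j$ for a common ellipsoid $\scripte$ and vectors with $v_1+v_2+v_3=0$; the doubly symmetric structure inherited by $\EE_\infty$ forces $v_j=0$ and $\scripte$ to be a spheroid of revolution about the $x_d$-axis. This yields $|E_{n,j}\symdif r_j\scripte|<\eps_0|\EE_n|$ for all large $n$, contradicting the failure hypothesis and establishing the proposition.
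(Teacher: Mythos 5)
Your proposal follows essentially the same route as the paper: normalize by special dilations, rule out mass escaping horizontally or vertically by playing near-extremality against the strict superadditivity of $\Lambda_d$ (Corollary~\ref{cor:Lambdastrongtriangle}), extract an $L^1$-convergent subsequence of the monotone doubly symmetric profiles, and apply Burchard's theorem to the limit, using $\EE=\EE^\sstar$ to force the translations to vanish. The tightness step you correctly flag as the main obstacle is implemented in the paper via a dyadic decomposition of each $E_j$ into layers $E_{j,k}$ according to fiber length (Lemma~\ref{lemma:theta} and the ensuing ``no gap'' lemma), which yields smallness of the mass in far-out layers rather than literal two-sided bounds on the extents; that weaker conclusion is what is actually attainable, and it suffices for your Helly/compactness step.
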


A consequence is that $\br$ is $\tau-O(\eps)$--admissible.
An equivalent formulation is that if 
$\mbf{E}$ is any $\tau$--admissible $\delta$--near extremizer,
not necesarily satisfying any symmetry hypothesis,
then there exists an ellipsoid such that
$\big|\, E_j^\sstar\symdif r_j \scripte \,\big|<\eps |{\mathbf E}|$.

For $x'\in\reals^{d-1}$ define $h_j(x')=|\{t\in\reals: (x',t)\in E_j\}| = |E_j^{x'}|$. 
Then since $E_j=E_j^\dagger$,
$h_j(x')$ depends only on $|x'|$, and is a nonincreasing function
of $|x'|$. Since $E_j=E_j^\star$, 
$E=\{(x',t): |t|\le \tfrac12 h_j(x')\}$ up to a Lebesgue null set.

For $k\in\integers$
define the sets $E'_{j,k}\subset\reals^{d-1}$ by
\begin{equation}
E'_{j,k}=\{x'\in\reals^{d-1}: 2^k\le h_j(x') < 2^{k+1} \}
\end{equation}
and  define
\begin{equation}
E_{j,k}=E_j\cap \pi^{-1}(E'_{j,k})=\{x\in E_j: \pi(x) \in E'_{j,k}\}
\end{equation}
where $\pi:\reals^d\to\reals^{d-1}$ is the projection $\pi(x',t)=x'$.

\begin{lemma} \label{lemma:theta}
There exist constants $c,C\in\reals^+$,
depending only on the dimension $d$, such that
\begin{equation}
\scriptt(E_{1,k_1},E_{2,k_2},E_{3,k_3})
\le C \theta
(E_{1,k_1},E_{2,k_2},E_{3,k_3})
\prod_{j=1}^3|E_{j,k_j}|^{2/3}
\end{equation} 
where
\begin{equation} \theta(E_{1,k_1},E_{2,k_2},E_{3,k_3})
= \min_{m,n}2^{-|k_m-k_n|/3}
\cdot \min_{\mu,\nu} \big(|E'_{\mu,k_\mu}|\,/\,|E'_{\nu,k_\nu}|\big)^{1/3}
\end{equation} 
where the minima are taken over all $m\ne n\in\{1,2,3\}$
and over all $\mu\ne\nu\in\{1,2,3\}$, respectively.
\end{lemma}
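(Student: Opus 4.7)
The plan is to exploit the product structure forced by the hypothesis $E_{j,k_j}^\dagger = E_{j,k_j} = E_{j,k_j}^\star$, inherited from $E_j = E_j^\sstar$, to slice the trilinear integral along the last coordinate. Double symmetry forces $E_{j,k_j}^{x'}$ to be a centered interval of length $h_j(x') \in [2^{k_j},\,2^{k_j+1})$ when $x' \in E'_{j,k_j}$, and empty otherwise; in particular $|E_{j,k_j}|$ is comparable to $2^{k_j}|E'_{j,k_j}|$, with constants in $[1,2)$. Fubini's theorem then yields
\[
\scriptt(E_{1,k_1},E_{2,k_2},E_{3,k_3}) = \int_{x'_1+x'_2+x'_3=0} \scriptt_1\bigl(E_{1,k_1}^{x'_1},\,E_{2,k_2}^{x'_2},\,E_{3,k_3}^{x'_3}\bigr)\,d\lambda'(x'),
\]
where $\scriptt_1$ denotes the one-dimensional trilinear form.

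Next I would apply the elementary bound that $\scriptt_1$ on three centered intervals of lengths $\ell_1,\ell_2,\ell_3$ is at most $\min_{i\ne j}\ell_i\ell_j$, obtained by trivially integrating out the third variable after expressing the 1-d trilinear form as a 2-d Lebesgue measure. With $k_{(1)}\le k_{(2)}\le k_{(3)}$ the nondecreasing rearrangement of $(k_1,k_2,k_3)$, this gives
\[
\scriptt(E_{1,k_1},E_{2,k_2},E_{3,k_3}) \le C\cdot 2^{k_{(1)}+k_{(2)}}\cdot \scriptt_{d-1}\bigl(E'_{1,k_1},\,E'_{2,k_2},\,E'_{3,k_3}\bigr).
\]
Applying the analogous trivial estimate $\scriptt_{d-1}(A_1,A_2,A_3) \le \min_{i\ne j}|A_i||A_j|$ to the projection factor then produces
\[
\scriptt(E_{1,k_1},E_{2,k_2},E_{3,k_3}) \le C\cdot 2^{k_{(1)}+k_{(2)}}\cdot m_{(1)}\,m_{(2)},
\]
where $m_{(1)}\le m_{(2)}\le m_{(3)}$ is the nondecreasing rearrangement of the projection measures $(|E'_{1,k_1}|,|E'_{2,k_2}|,|E'_{3,k_3}|)$.

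Finally I would compare this with the target expression $\theta\prod_j|E_{j,k_j}|^{2/3}$. Using $|E_{j,k_j}|\asymp 2^{k_j}|E'_{j,k_j}|$, together with the identifications $\min_{m\ne n}2^{-|k_m-k_n|/3} = 2^{-(k_{(3)}-k_{(1)})/3}$ and $\min_{\mu\ne\nu}(|E'_{\mu,k_\mu}|/|E'_{\nu,k_\nu}|)^{1/3} = (m_{(1)}/m_{(3)})^{1/3}$, a short computation shows that the ratio of $\theta\prod_j|E_{j,k_j}|^{2/3}$ to our upper bound $2^{k_{(1)}+k_{(2)}}m_{(1)}m_{(2)}$ equals, up to an absolute constant, $2^{(k_{(3)}-k_{(2)})/3}\cdot (m_{(3)}/m_{(2)})^{1/3}$, which is at least $1$ by the sorting. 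This produces the desired inequality.

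The principal bookkeeping obstacle I anticipate is keeping track of two independent orderings: the ordering of $(k_j)$ need not coincide with that of $(|E'_{j,k_j}|)$, and the $\theta$ factor is specifically designed to accommodate both types of spread. Once each crude factor is expressed in terms of the sorted sequences $k_{(i)}$ and $m_{(i)}$ alone, and the minima appearing in $\theta$ are evaluated on these sorted sequences, the two orderings decouple and the algebra closes mechanically, with no case analysis required.
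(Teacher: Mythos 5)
Your argument is correct and is essentially the paper's own proof: both slice $\scriptt_d$ via Fubini into $\scriptt_1$ acting on the vertical fibers integrated against $\scriptt_{d-1}$ on the projections, apply at each level the trivial bound by the product of the two smallest measures, and use $|E_{j,k}|\asymp 2^{k}|E'_{j,k}|$ to convert to the form involving $\theta$. The only difference is organizational --- the paper rewrites the two-smallest-measures bound as $|A_m|^{1/3}|A_n|^{-1/3}\prod_j|A_j|^{2/3}$ at each stage, whereas you defer that algebra to a single sorted-sequence computation at the end --- and your final ratio computation checks out.
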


By its definition, this quantity $\theta$ does not exceed $1$,
but it may be smaller. We aim to exploit these potential small values.

\begin{proof}
In proving the lemma we will use the trilinear forms $\scriptt$
in $\reals^\kappa$ with $\kappa$ equal to each of $d$, $d-1$, and $1$, denoting each by
$\scriptt_\kappa$ to indicate which dimension is in play. 
Consider any Lebesgue measurable sets $A_j\subset\reals^{\kappa}$ with finite measures.
Write $\{1,2,3\} = \{k,m,n\}$ where $|A_m|\le |A_k|\le |A_n|$.
Then 
\begin{align*} %\label{trivialYoung}
\scriptt_\kappa(A_1,A_2,A_3) &\le |A_m|\cdot|A_k|
\\ & \le |A_m|^{1/3} |A_k|^{1/3}|A_n|^{-2/3} \cdot\prod_{j=1}^3 |A_j|^{2/3}
\\ &\le |A_m|^{1/3} |A_n|^{-1/3} \cdot\prod_{j=1}^3 |A_j|^{2/3}.  \end{align*}

Let $m\ne n$ and $\mu\ne\nu$.
\begin{align*}
\scriptt_d(E_{1,k_1},E_{2,k_2},E_{3,k_3})
& = 
\int
%_{x'_1+x'_2+x'_3=0} 
\scriptt_{1}(E_{1,k_1}^{x'_1},E_{2,k_2}^{x'_2},E_{3,k_3}^{x'_3})\,d\lambda
(x'_1,x'_2,x'_3)
\\& \le
\int
%_{x'_1+x'_2+x'_3=0} 
C2^{-|k_m-k_n|/3} \prod_{j=1}^3 |E_{j,k_j}^{x'_j}|^{2/3}\one_{E'_{j,k_j}}(x'_j)
\,d\lambda
(x'_1,x'_2,x'_3)
\intertext{by the above bound for $\scriptt_\kappa(A_1,A_2,A_3)$ with 
$\kappa=1$ and $A_j = E^{x'_j}_{j,k_j}$}
\\& =
C2^{-|k_m-k_n|/3} 
\prod_{i=1}^3  2^{2k_i/3}
%_{x'_1+x'_2+x'_3=0} 
\scriptt_{d-1}(E'_{1,k_1},E'_{2,k_2},E'_{3,k_3})
%\\& = C2^{-|k_m-k_n|/3} \prod_{m=1}^3 2^{2k_m/3} \cdot \scriptt_{d-1}(E'_{1,k_1},E'_{2,k_2},E'_{3,k_3})
\\& \le C2^{-|k_m-k_n|/3} 
\prod_{i=1}^3 2^{2k_i/3}
|E'_{\mu,k_\mu}|^{1/3}|E'_{\nu,k_\nu}|^{-1/3}
\prod_{j=1}^3 |E'_{j,k_j}|^{2/3}
\\&  \le
C 2^{-|k_m-k_n|/3} 
|E'_{\mu,k_\mu}|^{1/3}|E'_{\nu,k_\nu}|^{-1/3}
\prod_{j=1}^3 |E_{j,k_j}|^{2/3}
\end{align*}
where the value of the constant $C$ may change from each occurrence to the next.
\end{proof}

\begin{lemma}
For any $d\ge 2$ and $\tau>0$
there exist constants $c,C\in\reals^+$ 
such that for any $\tau$--admissible ordered triple $\mbf{E}$ of subsets of $\reals^d$
satisfying 
$\scriptt(\mbf{E})\ge \tfrac12 \scriptt(\mbf{E}^\Star)$
there exist $K_j\in\integers$
such that 
\begin{align}
|K_i-K_j| &\le C\ \text{ for all $i\ne j\in\{1,2,3\}$} 
\\ |E_{j,K_j}| &\ge c|\mbf{E}|\ \text{ for all $j\in\{1,2,3\}$.}
\end{align} 
\end{lemma}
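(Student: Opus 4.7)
The plan is to use the dyadic decomposition $\scriptt(\mbf{E})=\sum_{\mathbf{k}\in\integers^3}\scriptt(\mbf{E}_\mathbf{k})$ with $\mbf{E}_\mathbf{k}=(E_{1,k_1},E_{2,k_2},E_{3,k_3})$ together with a Cauchy--Schwarz-type bound to isolate one term $\scriptt(\mbf{E}_\mathbf{K})$ comparable to $|\mbf{E}|^2$; Lemma~\ref{lemma:theta} applied to that single term will then simultaneously force $\Delta(\mathbf{K}):=\max_{i\ne j}|K_i-K_j|$ to be bounded and each $|E_{j,K_j}|$ to be at least $c|\mbf{E}|$. First I would note that $\tau$--admissibility makes $|E_1|,|E_2|,|E_3|$ all comparable to $|\mbf{E}|$, so $\scriptt(\mbf{E}^\Star)=\Lambda_d(|E_1|,|E_2|,|E_3|)\ge c(\tau,d)|\mbf{E}|^2$, and the hypothesis therefore yields $\scriptt(\mbf{E})\ge c_0|\mbf{E}|^2$. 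The key inequality is
\begin{equation*}
\scriptt(\mbf{E})=\sum_\mathbf{k}\scriptt(\mbf{E}_\mathbf{k})^{1/2}\cdot\scriptt(\mbf{E}_\mathbf{k})^{1/2}\le \Bigl(\max_\mathbf{k}\scriptt(\mbf{E}_\mathbf{k})\Bigr)^{1/2}\sum_\mathbf{k}\scriptt(\mbf{E}_\mathbf{k})^{1/2},
\end{equation*}
which reduces everything to showing $\sum_\mathbf{k}\scriptt(\mbf{E}_\mathbf{k})^{1/2}\le C|\mbf{E}|$.

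For that bound, I would apply Lemma~\ref{lemma:theta} in the weaker form $\scriptt(\mbf{E}_\mathbf{k})^{1/2}\le C\cdot 2^{-\Delta(\mathbf{k})/6}\prod_j|E_{j,k_j}|^{1/3}$, followed by AM--GM to replace the geometric mean $\prod_j|E_{j,k_j}|^{1/3}$ by the arithmetic mean $\tfrac13\sum_j|E_{j,k_j}|$. For each fixed $j$, the resulting triple sum factors as $\sum_{k_j}|E_{j,k_j}|\cdot\sum_{k_i,k_\ell}2^{-\Delta(\mathbf{k})/6}$ where $\{i,\ell\}=\{1,2,3\}\setminus\{j\}$, and the inner double sum is uniformly bounded in $k_j$ since $\Delta(\mathbf{k})\ge\max(|k_j-k_i|,|k_j-k_\ell|)\ge (|k_j-k_i|+|k_j-k_\ell|)/2$; the outer sum is then just $|E_j|\le|\mbf{E}|$. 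Summing over $j$ finishes the linear-in-$|\mbf{E}|$ bound.

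Combining these yields $\max_\mathbf{k}\scriptt(\mbf{E}_\mathbf{k})\ge c_1|\mbf{E}|^2$; let $\mathbf{K}=(K_1,K_2,K_3)$ attain this maximum. Applying Lemma~\ref{lemma:theta} one more time, the inequality $c_1|\mbf{E}|^2\le C\cdot 2^{-\Delta(\mathbf{K})/3}\prod_j|E_{j,K_j}|^{2/3}$, together with the trivial bound $|E_{j,K_j}|\le|\mbf{E}|$, forces both $\Delta(\mathbf{K})\le C_0$ and $\prod_j|E_{j,K_j}|\ge c'|\mbf{E}|^3$; the product lower bound combined with each individual upper bound $|E_{j,K_j}|\le|\mbf{E}|$ forces each factor to be at least $c|\mbf{E}|$.

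The hard part will be the bound $\sum_\mathbf{k}\scriptt(\mbf{E}_\mathbf{k})^{1/2}\le C|\mbf{E}|$: the naive estimate $\sum_{k_j}|E_{j,k_j}|^{1/3}$ need not converge, so one must exploit the decay of $2^{-\Delta(\mathbf{k})/6}$ in a way that couples the three indices. AM--GM is the right device here, since it converts the product of cube roots (each individually non-summable) into a sum of three terms, each isolating one index against which the decaying exponential can be freely summed while the other two indices are absorbed into the convergent geometric series $\sum_u 2^{-|u|/12}$.
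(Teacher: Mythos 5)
Your proposal is correct and follows essentially the same route as the paper, whose proof is only a two-sentence sketch ("if no such triple exists, Lemma~\ref{lemma:theta} implies $\scriptt(\mbf{E})\ll|\mbf{E}|^2$, contradicting $\scriptt(\mbf{E}^\Star)\gtrsim|\mbf{E}|^2$"): the ingredients --- the dyadic decomposition, Lemma~\ref{lemma:theta}, the lower bound $\scriptt(\mbf{E}^\Star)\ge c(\tau,d)|\mbf{E}|^2$ from $\tau$--admissibility, and the summability of $2^{-\Delta(\mathbf{k})/6}$ against the measures --- are exactly those the paper's argument implicitly requires. Your max-times-sum packaging and the AM--GM step are a clean way of supplying the details the paper omits.
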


\begin{proof}
If $C$ is sufficiently large, and if $c$ is sufficiently small,
and if no such triple $(K_i: 1\le i\le 3)$ exists,
then Lemma~\ref{lemma:theta}
implies that $\scriptt(\mbf{E}) \ll |\mbf{E}|^2$.
On the other hand, $\scriptt(\EE^\Star)$
is comparable to $|\mbf{E}^\Star|^2$ so long as $\tau$ is bounded below.
Thus the hypothesis $\scriptt(\mbf{E})\ge \tfrac12 \scriptt(\mbf{E}^\Star)$
is contradicted.
\end{proof}

\begin{definition}
A special dilation of $\reals^d$ is a linear transformation of the form
\begin{equation} T(x_1,\dots,x_d)  = (rx_1,\dots,rx_{d-1},\rho x_d)\end{equation}
for some $r,\rho\in\reals^+$.
\end{definition}

The next result is a direct application of the preceding lemma.
\begin{corollary}
Let $d\ge 2$.
There exist $C,c\in\reals^+$ such that under the hypotheses of the preceding lemma,
there exists a special dilation $T$ of $\reals^d$ with determinant equal to $1$
such that after replacement of ${\mathbf E}$ by $(T(E_1),T(E_2),T(E_3))$, 
\begin{equation}
\sum_{|k|\le C} |E_{j,k}| \ge c|{\mathbf E}|
\text{ for all $j\in\{1,2,3\}$.}
\end{equation}
\end{corollary}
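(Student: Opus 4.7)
The plan is to translate the conclusion of the preceding lemma, which gives integers $K_1,K_2,K_3$ clustered within a bounded range and with $|E_{j,K_j}|\ge c|\mbf{E}|$, into a statement about the level $k=0$ by conjugating with an explicit special dilation chosen so that the common cluster value of $K_j$ is moved to $0$.

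Concretely, I would track how the height functions $h_j(x')=|E_j^{x'}|$ and their level sets $E'_{j,k},E_{j,k}$ transform under a special dilation $T(x',x_d)=(rx',\rho x_d)$. A direct computation shows that the new height function of $T(E_j)$ at $y'\in\reals^{d-1}$ is $\tilde h_j(y')=\rho\, h_j(y'/r)$, so the new level sets are $\widetilde{E}'_{j,k}=r\cdot E'_{j,k-\ell}$ whenever $\rho=2^{-\ell}$, and consequently $\widetilde{E}_{j,k}=T(E_{j,k-\ell})$. Since $T$ preserves Lebesgue measure (because $\det T=r^{d-1}\rho=1$), $|\widetilde{E}_{j,k}|=|E_{j,k-\ell}|$ and $|T(E_j)|=|E_j|$, so in particular $|T(\mbf{E})|=|\mbf{E}|$.

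Given the preceding lemma, pick $K\in\integers$ with $|K-K_j|\le C$ for $j=1,2,3$ (for example $K=K_1$), and set $\rho=2^{-K}$, $r=\rho^{-1/(d-1)}=2^{K/(d-1)}$; then the associated $T$ is a special dilation of determinant $1$. For this choice and for each $j$, the shifted index $k_j:=K_j-K$ satisfies $|k_j|\le C$, and the identity above gives
\begin{equation*}
|\widetilde{E}_{j,k_j}|=|E_{j,K_j}|\ge c|\mbf{E}|=c|T(\mbf{E})|.
\end{equation*}
Therefore, after replacing $\mbf{E}$ by $T(\mbf{E})$,
\begin{equation*}
\sum_{|k|\le C}|E_{j,k}|\ \ge\ |E_{j,k_j}|\ \ge\ c|\mbf{E}|
\end{equation*}
for each $j\in\{1,2,3\}$, as required.

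There is really no obstacle here; the only thing to be careful about is the bookkeeping of how the horizontal rescaling by $r$ and the vertical rescaling by $\rho$ transform $h_j$, the level sets $E'_{j,k}$, and the solid slabs $E_{j,k}$, and the verification that the determinant-one condition $r^{d-1}\rho=1$ is consistent with choosing $\rho=2^{-K}$ so that the cluster of indices $\{K_j\}$ is translated into a neighborhood of $0$.
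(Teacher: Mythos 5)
Your proof is correct and is exactly the ``direct application of the preceding lemma'' that the paper has in mind: conjugate by the measure-preserving special dilation $T(x',x_d)=(2^{K/(d-1)}x',2^{-K}x_d)$ to translate the clustered indices $K_j$ into $[-C,C]$. The only blemish is a sign slip in your intermediate formula (with $\rho=2^{-\ell}$ one gets $\widetilde{E}'_{j,k}=r\cdot E'_{j,k+\ell}$, not $E'_{j,k-\ell}$), but your final identity $|\widetilde{E}_{j,K_j-K}|=|E_{j,K_j}|$ is the correct one and the argument goes through.
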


\begin{lemma}
Let $d\ge 2$, $\tau>0$, and $C_0,c_0\in\reals^+$.
For any $\eps>0$ there exist $\delta>0$ and $A<\infty$ with the following property.
Let $\mbf{E}$ be a $\tau$--admissible ordered triple of Lebesgue measurable subsets of $\reals^d$
satisfying 
$\scriptt(\mbf{E})\ge (1-\delta)\scriptt(\mbf{E}^\Star)$.
Suppose that 
\begin{equation*}
\sum_{|k|\le C_0} |E_{j,k}| \ge c_0|\mbf{E}|
\text{ for all $j\in\{1,2,3\}$.}
\end{equation*}
Then 
\begin{equation}\label{nogap}
\sum_{|k|\ge A} |E_{j,k}| \le \eps|\mbf{E}|.
\end{equation}
\end{lemma}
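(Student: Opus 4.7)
The plan is to argue by contradiction via a compactness argument, in keeping with the author's remark that the global analysis relies on compactness. Suppose the lemma fails; then for some $\eps_0 > 0$ there exist sequences $\delta_n \to 0$, $A_n \to \infty$, and doubly symmetric $\tau$-admissible triples $\EE^{(n)}$ satisfying the near-extremality and core hypotheses, normalized so $|\EE^{(n)}| = 1$, such that $\sum_{|k|\ge A_n}|E^{(n)}_{j_0,k}| > \eps_0$ for some fixed index $j_0$ (after passing to a subsequence).

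Each $E^{(n)}_j$ is encoded by its nonincreasing radial height function $h^{(n)}_j$ on $\reals^{d-1}$; the core hypothesis pins these functions to the range $[2^{-C_0}, 2^{C_0+1})$ on a radial region of controlled positive measure. I would invoke Helly's selection theorem on these monotone functions to extract a further subsequence along which $h^{(n)}_j \to h^\infty_j$ pointwise almost everywhere, with each $h^\infty_j$ nonincreasing and nontrivial. Using the representation $\scriptt(\EE) = \int\int \Lambda_1\bigl(h_1(-x'_2-x'_3), h_2(x'_2), h_3(x'_3)\bigr)\,dx'_2\,dx'_3$ valid for doubly symmetric triples, a convergence argument (exploiting the uniform $L^1$ bound $\int h^{(n)}_j \le 1$ and the elementary estimate $\Lambda_1(a,b,c)\le bc$) passes both $\scriptt(\EE^{(n)})$ and $\scriptt((\EE^{(n)})^\Star)$ to the limit. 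Since $\delta_n \to 0$, the limit $\EE^\infty$ attains equality in the Riesz-Sobolev inequality. Burchard's theorem, together with the inherited double symmetry, forces $\EE^\infty$ to consist of homothetic ellipsoids of revolution around the $x_d$-axis centered at $0$, so $h^\infty_j(r) = 2b_j\sqrt{1 - (r/a_j)^2}\,\one_{[0,a_j]}(r)$ and consequently $\sum_{|k|\ge A}|E^\infty_{j_0,k}|\to 0$ as $A\to\infty$.

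Choosing $A$ large enough that $\sum_{|k|\ge A}|E^\infty_{j_0,k}| < \eps_0/2$ and then taking $n$ large enough that $A_n \ge A$, pointwise convergence of the heights yields $\sum_{|k|\ge A_n}|E^{(n)}_{j_0,k}| \le \sum_{|k|\ge A}|E^{(n)}_{j_0,k}| < \eps_0$ for large $n$, contradicting the standing assumption. The main obstacle will be the compactness step itself: one must rule out mass concentrating into a Dirac spike near $r = 0$ (the upper-tail divergence scenario) and mass spreading to arbitrarily large $r$ (the lower-tail scenario), either of which would decouple $\scriptt(\EE^\infty)$ from $\lim\scriptt(\EE^{(n)})$. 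The core hypothesis is precisely the ingredient that prevents these pathologies by fixing a controlled amount of mass at heights bounded away from $0$ and $\infty$, securing the required continuity of $\scriptt$ through the limiting procedure.
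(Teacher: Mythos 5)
There is a genuine gap, and it sits exactly where you flag the ``main obstacle.'' In your contradiction scenario a fixed fraction $\eps_0$ of the mass of $E^{(n)}_{j_0}$ lives at dyadic levels $|k|\ge A_n\to\infty$ \emph{by construction}: that mass necessarily escapes in the Helly limit (a spike of height $\ge 2^{A_n}$ is supported on a region of measure $\le 2^{-A_n}$, and a pancake of height $\le 2^{-A_n}$ carries its mass off to spatial infinity), so $|E^\infty_{j_0}|\le |E^{(n)}_{j_0}|-\eps_0'$ and, more importantly, $\scriptt(\EE^{(n)})$ does \emph{not} converge to $\scriptt(\EE^\infty)$: the pure tail contribution $\scriptt(E^{(n),+}_1,E^{(n),+}_2,E^{(n),+}_3)$ can be as large as $\Lambda_d$ of the tail masses and is simply lost in the limit. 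The core hypothesis $\sum_{|k|\le C_0}|E_{j,k}|\ge c_0|\mbf{E}|$ does not rule this out --- it only pins down a \emph{different} fraction $c_0$ of the mass; ruling out the escape of \emph{any} positive fraction is the conclusion of the lemma, so invoking it to secure continuity of $\scriptt$ through the limit is circular. Your closing step (``pointwise convergence of the heights yields $\sum_{|k|\ge A_n}|E^{(n)}_{j_0,k}|\le\sum_{|k|\ge A}|E^{(n)}_{j_0,k}|$ close to the limit value'') has the same circularity.

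What is actually needed, and what the paper supplies, is quantitative rather than compactness-based. The paper pigeonholes a dyadic gap $A<k<2A$ (or its mirror image) of small mass $\sigma$, splits each $E_j$ into high and low parts across that gap, kills the six mixed terms $\scriptt(E_1^\pm,E_2^\pm,E_3^\pm)$ using the off-diagonal decay $2^{-|k_m-k_n|/3}$ of Lemma~\ref{lemma:theta}, and then --- this is the decisive ingredient absent from your proposal --- applies the \emph{strict superadditivity} of $\Lambda_d$ (Corollary~\ref{cor:Lambdastrongtriangle}) to show that $\scriptt(E_1^+,E_2^+,E_3^+)+\scriptt(E_1^-,E_2^-,E_3^-)\le(1-\rho)\scriptt(\EE^\Star)$ when both pieces carry a definite fraction of the mass. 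That $\rho$-loss is what contradicts $\delta$-near-extremality. Any repair of your compactness route would still have to bound the surviving tail term by $\Lambda_d$ of the tail masses and then invoke this same superadditivity to beat $\Lambda_d(\gamma)$, so the decomposition argument cannot be bypassed; note also that this lemma is itself the input that makes the later compactness step (Lemma~\ref{lemma:sstarprecompactness}) legitimate, which is why it cannot be proved by that compactness.
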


\begin{proof}
We argue by contradiction.
Let $d,\tau,C_0,c_0$ be given.

If the lemma were not true then one of two possibilities must hold. 
In the first case, there exists
$\eps>0$  
such that for any $\delta>0$, $\sigma>0$, and $A\in\integers^+$ 
there exists 
a $\tau$--admissible ordered triple $\mbf{E}$ satisfying 
$\scriptt(\mbf{E})\ge (1-\delta)\scriptt(\mbf{E}^\Star)$
such that
\begin{align}
|\cup_{k \ge 2A} E_{m,k}| &\ge \eps|\mbf{E}|\ \text{ for some $m\in\{1,2,3\}$}
\\ |\cup_{A< k< 2A} E_{j,k}| &\le \sigma|\mbf{E}|\ \text{ for all $j\in\{1,2,3\}$}
\\ |\cup_{k\le A } E_{j,k}| &\ge c|\mbf{E}|\ \text{ for all $j\in\{1,2,3\}$.}
\end{align}
Since the form $\scriptt(E_1,E_2,E_3)$ is invariant under permutation of its arguments,
we may assume that the first inequality holds for $m=1$.

The second case is much like the first, except that the sign of the index $k$ in the
above inequalities is in effect reversed:
\begin{align}
|\cup_{k \le -2A} E_{1,k}| &\ge \eps|\mbf{E}|
\\ |\cup_{-2A<k< -A} E_{j,k}| &\le \sigma|\mbf{E}|\ \text{ for all $j\in\{1,2,3\}$}
\\ |\cup_{k\ge -A } E_{j,k}| &\ge c|\mbf{E}|\ \text{ for all $j\in\{1,2,3\}$.}
\end{align}
We will discuss only the first case; the same reasoning with $k$ replaced by $-k$ will apply 
equally well to the second. 

Decompose $E_1=E_1^+\cup E_1^-\cup E_1^0$
by setting 
\begin{align*}
E_1^+ =\bigcup_{k\ge 2A} E_{1,k},
\qquad E_1^- =\bigcup_{k\le A} E_{1,k},
\qquad E_1^0 =\bigcup_{A<k< 2A} E_{1,k}.
\end{align*}
For $j\in\{2,3\}$ decompose $E_j=E_j^+\cup E_j^-$ by setting
\begin{align*}
E_j^+ =\bigcup_{k\ge \tfrac32 A} E_{j,k},
\qquad E_j^- =\bigcup_{k< \tfrac32 A} E_{j,k}.
\end{align*}

By inserting these three decompositions into $\scriptt(E_1,E_2,E_3)$ and invoking
multilinearity of $\scriptt$, we obtain a decomposition
\[ \scriptt(E_1,E_2,E_3)
= \scriptt(E_1^+,E_2^+,E_3^+)
+ \scriptt(E_1^-,E_2^-,E_3^-)
+ \scriptt(E_1^0,E_2,E_3)
\text{ plus six more terms.}\]
All four terms of the form $\scriptt(E^0,E_2^\pm,E_3^\pm)$ have been combined into one single term,
so that the total number of terms is nine, rather than twelve.

Each of the six terms not shown explicitly takes the form $\scriptt(E_1^\pm,E_2^\pm,E_3^\pm)$,
where the three $\pm$ signs are not all equal. By Lemma~\ref{lemma:theta},
\[ \scriptt(E_1^\pm,E_2^\pm,E_3^\pm) \le C2^{-cA}|\mbf{E}|^2\]
for each of those six terms.
Moreover
\[ \scriptt(E_1^0,E_2,E_3)\le |E_1^0|^{2/3}|E_2|^{2/3}|E_3|^{2/3} \le C \sigma^{2/3}|\mbf{E}|^2,\]
where the constant $C$, like other constants in this argument, depends on $\tau$.
Thus
\begin{align*} \scriptt(E_1,E_2,E_3)
&\le  \scriptt(E_1^+,E_2^+,E_3^+) + \scriptt(E_1^-,E_2^-,E_3^-) + (C2^{-cA}+C\sigma^{2/3}) |\mbf{E}|^2.
\\& \le  \scriptt(E_1^+,E_2^+,E_3^+) + \scriptt(E_1^-,E_2^-,E_3^-) 
+ C(2^{-cA}+\sigma^{2/3})\scriptt(\mbf{E}^\Star).
\end{align*}

By the Riesz-Sobolev inequality followed by Corollary~\ref{cor:Lambdastrongtriangle},
\begin{align*}
\scriptt(E_1^+,E_2^+,E_3^+) + \scriptt(E_1^-,E_2^-,E_3^-) 
&\le \Lambda_d(|E_1^+|,|E_2^+|,|E_3^+|) + \Lambda_d(|E_1^-|,|E_2^-|,|E_3^-|)
\\& \le (1-\rho) \Lambda_d(|E_1^+|+|E_1^-|,|E_2^+|+|E_2^-|,|E_3^+|+|E_3^-|) 
\\& = (1-\rho)\Lambda_d(|E_1\setminus E_1^0|,|E_2|,|E_3|) 
\\& \le  (1-\rho)\Lambda_d(|E_1|,|E_2|,|E_3|) 
\\& =  (1-\rho)\scriptt(\mbf{E}^\Star)
\end{align*}
where $\rho>0$ depends only  on $d, \eps, \tau$.
In all,
\begin{equation}
\scriptt(\EE)
\le  \big[(1-\rho) + C2^{-cA}+\sigma^{2/3}\big]\scriptt(\mbf{E}^\Star).
\end{equation}
If $A$ is sufficiently large and $\sigma,\delta$ are sufficiently small, this contradicts the assumption
that $\scriptt(\EE) \ge (1-\delta) \scriptt(\mbf{E}^\Star)$.
\end{proof}

We say that a family of Lebesgue measurable subsets of $\reals^d$ is precompact
if every sequence
$(E_\nu)_{\nu\in\naturals}$ of sets
in this family has a subsequence that converges to some set in the sense that
$\lim_{k\to\infty} |E_{\nu_k}\symdif E|=0$.
This is equivalent to precompactness of the associated family of indicator
functions $\one_E$ in $L^1(\reals^d)$.

In the next lemma,
$E^\nu=\{(x',t)\in\reals^{d-1}\times\reals^1: |t|\le \tfrac12 h_\nu(x')\}$
where $h_\nu$ is a radial nonincreasing function,
and $E^\nu_k=\{(x',t)\in E^\nu: 2^{k-1}\le h_\nu(x')<2^{k}\}$.

\begin{lemma} \label{lemma:sstarprecompactness}
Let $\varphi:\naturals\to(0,\infty)$ satisfy $\lim_{|k|\to\infty} \varphi(k)=0$.
Let $(\varrho_\nu: \nu\in\naturals)$ satisfy $\lim_{\nu\to\infty}\varrho_\nu=0$.
Let $(E^\nu)_{\nu\in\naturals}$ be a sequence of subsets of $\reals^d$
satisfying $E^\nu = (E^\nu)^\sstar$ and $|E^\nu|=|\bb_d|$.
Suppose that for all $\nu$ and all $K\in\naturals$,
\begin{equation} \sum_{|k|>K} |E^\nu_k| \le \varphi(K)+\varrho_\nu.  \end{equation}
Then $\{E^\nu\}$ is a precompact family of subsets of $\reals^d$.
\end{lemma}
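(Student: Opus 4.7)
The plan is to establish $L^1$-precompactness of the profile functions $h_\nu(x')=|(E^\nu)^{x'}|$, which is equivalent to precompactness of $\{E^\nu\}$ in the symmetric-difference metric. Since each $E^\nu$ is doubly symmetric, $E^\nu=\{(x',t):|t|\le\tfrac12 h_\nu(x')\}$ and $h_\nu(x')=H_\nu(|x'|)$ for some nonincreasing $H_\nu:[0,\infty)\to[0,\infty]$. A direct computation gives $|E^\nu\symdif F|=\int_{\reals^{d-1}}|h_\nu-g|$ for any $F$ of the analogous form with profile $g$. I will apply Helly's selection theorem to pass to a pointwise a.e.\ convergent subsequence, then upgrade to $L^1$-convergence via Vitali's theorem.

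For the pointwise step, the constraint $\int h_\nu=\omega_d$ together with radial monotonicity gives $\omega_{d-1}r^{d-1}H_\nu(r)\le\omega_d$, hence $H_\nu(r)\le Cr^{-(d-1)}$. So $\{H_\nu\}$ is uniformly bounded on every $[\epsilon,\infty)$, and Helly's theorem plus a diagonal argument produces a subsequence (still denoted $\nu$) with $H_\nu\to H$ a.e.\ on $(0,\infty)$ for some nonincreasing $H$; equivalently $h_\nu\to h:=H(|\cdot|)$ a.e.\ on $\reals^{d-1}$.

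For tightness I use the lower half of the tail hypothesis. Let $r_\nu(K)$ denote the radius of the ball $\{h_\nu\ge 2^{-K-1}\}$. The elementary bound $2^{-K-1}\omega_{d-1}r_\nu(K)^{d-1}\le \int h_\nu=\omega_d$ gives $r_\nu(K)\le R(K)$ independently of $\nu$, and on $\{|x'|>R(K)\}$ one has $h_\nu<2^{-K-1}$, so
\[ \int_{|x'|>R(K)} h_\nu \le \int_{0<h_\nu<2^{-K-1}} h_\nu = \sum_{k<-K}|E^\nu_k| \le \varphi(K)+\varrho_\nu. \]
Given $\eps>0$, choose $K$ with $\varphi(K)<\eps/2$, then $\nu_0$ with $\varrho_\nu<\eps/2$ for $\nu\ge\nu_0$, and absorb the initial $\nu<\nu_0$ using absolute continuity of each $h_\nu\in L^1$. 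For equi-integrability, the upper half of the tail hypothesis gives, for any $L\in\naturals$ and measurable $A\subset\reals^{d-1}$,
\[ \int_A h_\nu \le 2^L|A| + \int_{h_\nu\ge 2^L} h_\nu \le 2^L|A| + \varphi(L)+\varrho_\nu, \]
and the same two-step choice of $L$ and $\nu_0$ produces a uniform modulus $\delta(\eps)$.

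With pointwise a.e.\ convergence, tightness, and equi-integrability in hand, Vitali's convergence theorem yields $h_\nu\to h$ in $L^1(\reals^{d-1})$, whence $|E^\nu\symdif E|\to 0$ with $E=\{(x',t):|t|\le\tfrac12 h(x')\}$. The central point — the one place where a nontrivial combination of inputs is required — is the tightness estimate: the lower tail controls the contribution of $h_\nu$ outside the sublevel ball $\{h_\nu\ge 2^{-K-1}\}$, while the $L^1$-mass constraint controls the radius of that ball uniformly in $\nu$. Everything else is bookkeeping.
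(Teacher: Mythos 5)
Your proof is correct, and since the paper explicitly omits this proof as ``straightforward and elementary,'' your Helly-plus-Vitali argument is exactly the kind of standard compactness argument intended: the reduction to $L^1$-precompactness of the radial nonincreasing profiles $h_\nu$, the pointwise bound $H_\nu(r)\le Cr^{-(d-1)}$ enabling Helly selection, and the use of the two tails of the hypothesis for tightness and equi-integrability respectively are all sound. No gaps.
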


The straightforward and elementary proof is omitted. \qed

%\fbox{\bf This is wrong.} {\it Need $\ell^1_k$ bound on $\varrho$, which doesn't depend on $k$ as it stands.}
% ?? fixed 2/16

\begin{lemma}
Let $d\ge 2$ and $\tau>0$.
There exist functions $\varrho$ and $\varphi$, depending only on $d,\tau$, satisfying
$\lim_{\delta\to 0}\varrho(\delta) = 0$ and
$\lim_{k\to \infty} \varphi(k) = 0$, with the following property.
For any $\tau$--admissible ordered triple $\mbf{E}$ of subsets of $\reals^d$ satisfying
$\mbf{E}=\mbf{E}^\sstar$ and
$\scriptt(\mbf{E}) \ge (1-\delta)\scriptt(\mbf{E}^\Star)$,
there exists a special dilation $T$ of $\reals^d$ such that $\max_j |T(E_j)|=1$ and
\begin{equation} \sum_{|k|>K} |(TE_j)_{k}| \le (\varphi(K)+\varrho(\delta))|\mbf{T}(\mbf{E})| \end{equation}
for all $K\in\naturals$ and $j\in\{1,2,3\}$.
\end{lemma}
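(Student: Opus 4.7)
The plan is to stitch together the three preceding results of this section via a diagonal construction. The first lemma of the section produces dyadic indices $K_1,K_2,K_3$ with $|E_{j,K_j}|$ a definite fraction of $|\mathbf{E}|$ and $\max_{i,j}|K_i-K_j|$ bounded; the corollary then uses a special dilation of determinant $1$ to shift these indices into a bounded window about $0$; and the preceding ``no-gap'' lemma upgrades this mere concentration into a tail bound that tends to $0$ as $\delta\to 0$. The present lemma merely repackages the $\eps$-dependent output of the no-gap lemma as a universal pair of functions $\varphi,\varrho$.

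First I would reconcile the two normalizations. The corollary produces a special dilation with $\det=1$, whereas the statement demands $\max_j|T(E_j)|=1$. Rather than compose two dilations, I would directly choose a single special dilation $T(x',x_d)=(rx',\rho x_d)$ with $\rho=2^{-K}$ for some $K$ in the bounded range of the $K_j$'s (which achieves concentration of $(TE_j)_k$ near $k=0$) and $r^{d-1}\rho=|\mathbf{E}|^{-1}$ (which achieves the normalization). Special dilations preserve the double symmetry $\mathbf{E}=\mathbf{E}^\sstar$, $\tau$--admissibility, and the ratio $\scriptt(\mathbf{E})/\scriptt(\mathbf{E}^\Star)$, so after this replacement one has $\sum_{|k|\le C_0}|(TE_j)_k|\ge c_0$ for constants $c_0,C_0$ depending only on $d,\tau$, together with $\max_j|T(E_j)|=1$.

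Second, apply the no-gap lemma to $T\mathbf{E}$: for each $\eps>0$ there exist thresholds $\delta_0(\eps)>0$ and $A(\eps)<\infty$ such that $\delta\le\delta_0(\eps)$ implies $\sum_{|k|\ge A(\eps)}|(TE_j)_k|\le\eps$ for every $j$. Taking $\eps_n=2^{-n}$, set $\delta_n=\delta_0(2^{-n})$ and $A_n=A(2^{-n})$; after replacing these by their monotone envelopes we may assume $\delta_n$ is nonincreasing and $A_n$ nondecreasing in $n$. Define $\varrho(\delta)=2^{-n}$ where $n=n(\delta)$ is the largest integer with $\delta\le\delta_n$ (and $\varrho(\delta)=1$ if no such $n$ exists), and $\varphi(K)=2^{-m}$ analogously, with $m=m(K)$ the largest integer satisfying $A_m\le K$. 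Both tend to $0$ in the intended limits. For given $\delta,K$, letting $n_0=\min(n(\delta),m(K))$, one has $\delta\le\delta_{n_0}$ and $A_{n_0}\le K$, so the no-gap lemma yields $\sum_{|k|>K}|(TE_j)_k|\le 2^{-n_0}=\max(\varrho(\delta),\varphi(K))\le\varrho(\delta)+\varphi(K)$.

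The only genuinely non-routine point is the first step: arranging concentration near $k=0$ and the normalization $\max_j|T(E_j)|=1$ simultaneously within a single special dilation. I expect to handle this by solving the two-parameter system for $(r,\rho)$ directly, as sketched. Everything else is either a verbatim application of the prior lemmas or the standard diagonal extraction above; the substantive technical content of this section --- the $\theta$--decomposition of $\scriptt$, the plus/minus/middle splitting of $E_j$, and the strict triangle inequality for $\Lambda_d$ --- is already done in the preceding lemmas, so no new analytic input is required here.
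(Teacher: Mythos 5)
Your proposal is correct and follows exactly the route the paper intends (the paper dismisses this lemma as "simply a reformulation of what has been shown above"): concentration near $k=0$ from the first lemma and its corollary, the no-gap lemma for the tails, and a monotone-envelope diagonalization to package the $\eps$--$\delta$--$A$ dependence into $\varphi$ and $\varrho$. Your handling of the single special dilation with $\rho=2^{-K}$ and $r^{d-1}\rho=|\mathbf{E}|^{-1}$ correctly reconciles the two normalizations, so nothing is missing.
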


This is simply a reformulation of what has been shown above. \qed

\begin{lemma} \label{lemma:getellipsoid}
Let $d\ge 2$ and $\tau>0$.
Given any $\eps>0$ there exists $\delta>0$ such that
for any $\tau$--admissible ordered triple $\mbf{E}$
satisfying $\scriptt(\mbf{E}) \ge (1-\delta)\scriptt(\mbf{E}^\Star)$
and $\mbf{E} = \mbf{E}^\sstar$,
there exists an ellipsoid $\scripte$ such that
$|E_1\symdif \scripte|<\eps |\mbf{E}|$.
\end{lemma}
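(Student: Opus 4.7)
The plan is a compactness-and-contradiction argument, with Burchard's equality theorem as the final input. Suppose the conclusion fails: there exist $\eps>0$, a sequence $\delta_\nu\downarrow 0$, and doubly symmetric, $\tau$--admissible triples $\EE^\nu=(\EE^\nu)^\sstar$ with $\scriptt(\EE^\nu)\ge(1-\delta_\nu)\scriptt((\EE^\nu)^\Star)$ such that $|E_1^\nu\symdif\scripte|\ge\eps\,|\EE^\nu|$ for every $\nu$ and every ellipsoid $\scripte\subset\reals^d$.

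First I would invoke the preceding lemma to produce, for each $\nu$, a special dilation $T_\nu$ with $\max_j|T_\nu E_j^\nu|=1$ obeying the uniform concentration estimate $\sum_{|k|>K}|(T_\nu E_j^\nu)_k|\le\varphi(K)+\varrho(\delta_\nu)$. Special dilations preserve $\sstar$--symmetry, preserve $\tau$--admissibility, leave the ratio $\scriptt(\cdot)/\scriptt((\cdot)^\Star)$ invariant, and map ellipsoids to ellipsoids, so the assumed obstruction transfers intact to $T_\nu\EE^\nu$. Applying Lemma~\ref{lemma:sstarprecompactness} to each of the three sequences $T_\nu E_j^\nu$ in turn (after a harmless rescaling to normalize total measure to $|\bb_d|$), one extracts a subsequence along which $T_\nu E_j^\nu\to A_j$ in $L^1(\reals^d)$ for $j=1,2,3$, with $|T_\nu E_j^\nu|\to|A_j|$.

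The limit triple $\mbf A=(A_1,A_2,A_3)$ satisfies $A_j=A_j^\sstar$ and is $\tau$--admissible (both properties being closed under $L^1$ convergence of indicators, with the lower bound $|A_j|\ge\tau^d$ coming from $\tau$--admissibility combined with the normalization $\max_j|T_\nu E_j^\nu|=1$); moreover $\scriptt(\mbf A)=\scriptt(\mbf A^\Star)$ by $L^1$--continuity of $\scriptt$ together with $\delta_\nu\to 0$ and continuity of $\Lambda_d$ in the radii. Burchard's characterization, applicable since $\mbf A$ is strictly admissible, then produces an ellipsoid $\scripte_0$, radii $r_j>0$, and vectors $v_j$ with $v_1+v_2+v_3=0$ such that $|A_j\symdif(r_j\scripte_0+v_j)|=0$. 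Since each $A_j$ is symmetric about the origin, re-centering $\scripte_0$ at $0$ forces $v_j=0$ and hence $A_j=r_j\scripte_0$ up to null sets. Pulling back, the set $\scripte^\nu:=T_\nu^{-1}(r_1\scripte_0)$ is again an ellipsoid, and
\[ |E_1^\nu\symdif\scripte^\nu|=|\det T_\nu^{-1}|\cdot|T_\nu E_1^\nu\symdif r_1\scripte_0|=|\EE^\nu|\cdot o(1), \]
where the identity $|\det T_\nu^{-1}|=|\EE^\nu|$ is forced by $\max_j|T_\nu E_j^\nu|=1$. This contradicts the standing hypothesis for all sufficiently large $\nu$.

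The main obstacle, such as it is, lies in the bookkeeping around the special dilations: one must verify that $T_\nu$ preserves every invariance that must survive the limit, that the limit $\mbf A$ is nondegenerate (which is where $\tau$--admissibility and the normalization are essential), and that the ellipsoid extracted from Burchard can be transported back to the original scale without inflating the error beyond $o(|\EE^\nu|)$. Once these routine checks are in place, the contradiction is immediate from Burchard's rigidity theorem.
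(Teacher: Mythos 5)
Your argument is correct and follows essentially the same route as the paper: reduce via the special dilation to the uniform concentration estimate, extract an $L^1$--convergent subsequence using Lemma~\ref{lemma:sstarprecompactness}, pass to the limit in $\scriptt$, and conclude by Burchard's rigidity theorem. The extra bookkeeping you supply (transporting the obstruction and the ellipsoid through $T_\nu$, and the determinant computation) is sound and merely makes explicit what the paper compresses into ``dilation-invariance of the hypotheses and conclusion.''
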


\begin{proof}
By the preceding lemma and the dilation-invariance of the hypotheses and conclusion,
it suffices to prove this under the additional assumption that
$\sum_{|k|>K} |E_{j,k}| \le (\varphi(K)+\varrho(\delta))|\mbf{E}|$ 
for all $k\in\integers$ and $j\in\{1,2,3\}$,
where $\varphi(k),\varrho(\delta)\to 0$ as $|k|\to\infty$
and as $\delta\to 0$, respectively.

Suppose that the lemma were false. Then there would exist $\eps>0$ and a sequence 
$(\mbf{E^\nu}: \nu\in\naturals)$ of ordered triples of subsets of $\reals^d$ satisfying all of
the above hypotheses with parameters $\delta_\nu$ tending to zero, such that 
\begin{equation} \label{tobecontradicted}
\liminf_{\nu\to\infty}\,\inf_\scripte\, |\scripte\symdif E^\nu_1|>0, \end{equation}
where the infimum is taken over all ellipsoids $\scripte\subset\reals^d$.
By Lemma~\ref{lemma:sstarprecompactness} and a diagonal argument, 
there exist a subsequence $(\mbf{E^{\nu_n}})$
and an ordered triple $\mbf{E}$ of subsets of $\reals^d$
such that $\lim_{n\to\infty}|E^{\nu_n}_j\symdif E_j| = 0$ for $j\in\{1,2,3\}$.
Since $|E^{\nu_n}_j|\to |E_j|$ for all $j$,
the limiting triple $\mbf{E}$ is also $\tau$--admissible.
Moreover
$\scriptt(\mbf{E}) = \lim_{n\to\infty} \scriptt(\mbf{E^{\nu_n}})$
and 
$\scriptt(\mbf{E}^\Star) = \lim_{n\to\infty} \scriptt((\mbf{E^{\nu_n}})^\Star)$.
Since $\lim_{\nu\to\infty}\delta_{\nu} = 0$, 
we conclude that
$\scriptt(\mbf{E}) = \scriptt(\mbf{E}^\Star)$.  
Since $\mbf{E}$ is strictly admissible,
Burchard's theorem \cite{burchard} guarantees that each set $E_j$ is an ellipsoid.
This contradicts \eqref{tobecontradicted}.
\end{proof}

Burchard's theorem yields supplementary conclusions which will be exploited below: 
The three ellipsoids $E_j$
are homothetic, and their centers $c_j$ satisfy $\sum_{j=1}^3 c_j=0$. 

We are now ready to complete the proof of 
Proof of Proposition~\ref{prop:symmetrizedcompactness}
concerning near-extremizers of the Riesz-Sobolev inequality
that enjoy the symmetry $\mbf{E}=\mbf{E}^\sstar$.

\begin{proof}[Proof of Proposition~\ref{prop:symmetrizedcompactness}]
Let $d\ge 2$ and $\tau>0$.  Let $\mbf{E}$ be $\tau$--admissible, 
and satisfy $\scriptt(\mbf{E})\ge (1-\delta)\scriptt(\mbf{E}^\Star)$ and $\mbf{E}=\mbf{E}^\sstar$.
By Lemma~\ref{lemma:getellipsoid},
there exists a $\tau$--admissible ordered triple $\scripte = (\scripte_1,\scripte_2,\scripte_3)$
of ellipsoids in $\reals^d$ such that for each $j\in\{1,2,3\}$,
$|\scripte_j\symdif E_j|<\eps|\mbf{E}|$ where $\eps=o_\delta(1)$.  Then
\[ \scriptt(\scripte) = \scriptt(\mbf{E}) + O(\eps|\mbf{E}|^2)\]
and $|\scripte_j|=|E_j|+O(\eps|\mbf{E}|)$.
It follows that from the $\tau$--admissibility of $\EE$ and the smallness of these
symmetric differences that 
$\scriptt(\scripte) \ge (1-C\eps)|\scripte|$ and 
$(\scripte_1,\scripte_2,\scripte_3)$ is $\tau-C\eps$--admissible.

Arguing by contradiction as in the proof of the preceding lemma,
but this time using the two supplementary conclusions of Burchard's theorem, we conclude
that there exist a single  ellipsoid $\tilde\scripte\subset\reals^d$, 
elements $y_j\in\reals^d$ satisfying $\sum_{j=1}^3 y_j=0$, and a $\tau-o_\delta(1)$--admissible
ordered triple $\br\in(0,\infty)^3$
such that \[ |E_j \symdif (r_j\tilde \scripte +y_j)| \le \eps \text{ for $j\in\{1,2,3\}$}\]
where $\eps=o_\delta(1)$ provided that $\tau>0$ remains fixed.

Because $E_j = E_j^\sstar$, the same holds with $y_j=0$ for all $j$.
\end{proof}

\section{Structure of Schwarz--symmetrized near-extremizers}
% that is, sets satisfying $\mbf{E} = \mbf{E}^\dagger$}

We have characterized near-extremizers with the symmetry property $\EE = \EE^\sstar$.
Next we use that characterization to analyze near-extremizers with the less restrictive
symmetry property $\EE=\EE^\dagger$.

By a vertical skew-shift $\mbf{T}$
we mean an element $\mbf{T}=(T_1,T_2,T_3)\in\baff(d)$ 
of the form $T_j(x',x_d) = (x',x_d+\ell_j(x'))$
where $\ell_j:\reals^{d-1}\to\reals$ is an affine mapping. 
Part of the definition of $\baff(d)$
is the requirement that $\sum_{j=1}^3 \ell_j\equiv 0$.

\begin{proposition} \label{prop:lesssymmetrizedcompactness}
Let $d\ge 2$.
Let $\tau>0$.
There exists $c>0$ such that
for any $\eps>0$
there exists $\delta>0$ with the following property.
Let ${\mathbf E}=(E_j: 1\le j\le 3)$
be any 
$\tau$--admissible 
ordered triple of Lebesgue measurable subsets of $\reals^d$
that is a $(1-\delta)$--near extremizer of the Riesz-Sobolev inequality
and satisfies $\mbf{E}^\dagger=\mbf{E}$.
There exists a vertical skew-shift $\mbf{T}\in\baff(d)$
such that for each $j\in\{1,2,3\}$,
%a $c\tau$--admissible ordered triple $(r_1,r_2,r_3)$ of positive real numbers
%and elements $y_j\in\reals^d$ satisfying $\sum_{j=1}^3 y_j=0$ such that
\begin{equation} \big|\, T_j(E_j)\symdif r_j \bb_{d} \,\big|<\eps |{\mathbf E}| 
\ \text{ for all } j\in\{1,2,3\}, \end{equation}
where $r_j=|E_j|^{1/d}/|\bb_d|^{1/d}$.
\end{proposition}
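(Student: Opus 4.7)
The plan is to reduce to the doubly symmetric case already handled in Proposition~\ref{prop:symmetrizedcompactness}, then recover the vertical alignment by applying the one-dimensional Riesz-Sobolev result of \cite{christRS3} slice-by-slice and exploiting the Schwarz symmetry $\mbf{E} = \mbf{E}^\dagger$ to rigidify the resulting center data. Since $\mbf{E} = \mbf{E}^\dagger$, Steiner symmetrization yields $\mbf{E}^\star = \mbf{E}^\sstar$, which is doubly symmetric and is again a $\delta$-near extremizer; Proposition~\ref{prop:symmetrizedcompactness} supplies an ellipsoid $\scripte\subset\reals^d$ centered at $0$ with $|E_j^\star \symdif r_j\scripte| < \eta|\mbf{E}|$ for some $\eta = o_\delta(1)$. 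The symmetry $(E_j^\star)^\dagger = E_j^\star$ forces $\scripte$ to be rotationally symmetric in the first $d-1$ coordinates, hence of the form $\{(x',t):|x'|^2/a^2+t^2/b^2\le 1\}$. Applying the common special dilation $(x',t)\mapsto(x'/a,t/b)$, an element of $\baff(d)$ that preserves both $\mbf{E}=\mbf{E}^\dagger$ and the near-extremality, reduces to the case $\scripte = \bb_d$. Writing $E_j = \{(x',t):|x'|\le \rho_j(t)\}$, the vertical slice measure $h_j(x') := |E_j^{x'}|$ depends only on $|x'|$ and satisfies $h_j(x')\approx 2(r_j^2-|x'|^2)^{1/2}_+$.

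Next I will slice vertically. By Fubini,
\[
\scriptt(\mbf{E}) = \int_{\sum x_j'=0}\scriptt_1\bigl(E_1^{x_1'},E_2^{x_2'},E_3^{x_3'}\bigr)\,d\lambda'(x'),
\]
and the analogous expression for $\scriptt(\mbf{E}^\star)$ replaces $\scriptt_1$ by $\Lambda_1(h_1,h_2,h_3)$. The chain $\scriptt(\mbf{E})\le\scriptt(\mbf{E}^\star)\le\scriptt(\mbf{E}^\Star)$ together with near-extremality forces the pointwise defect $\Lambda_1 - \scriptt_1$ to be small on average. Combined with Lemma~\ref{lemma:zeroprime} for admissibility control, this yields a subset $G \subset \{\sum x_j' = 0\}$ of nearly full measure on which $(E_j^{x_j'})$ is $\tau'$-admissible and a $\delta'$-near extremizer of the one-dimensional Riesz-Sobolev inequality, with $\tau'\to\tau$ and $\delta'\to 0$ as $\delta\to 0$. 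By the main theorem of \cite{christRS3}, for each triple in $G$ there exist centers $c_j(x_j')\in\reals$ with $c_1(x_1')+c_2(x_2')+c_3(x_3')=o(1)$ such that $E_j^{x_j'}$ differs from $[c_j(x_j')-\tfrac12 h_j(x_j'),\,c_j(x_j')+\tfrac12 h_j(x_j')]$ by a set of negligible one-dimensional measure.

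Now I rigidify the center data. Taking $c_j(x')$ to be the center of mass of $E_j^{x'}$, the Schwarz symmetry $E_j = E_j^\dagger$ forces $c_j$ to depend only on $|x'|$; write $c_j(x') = \gamma_j(|x'|)$. The approximate cocycle $\sum \gamma_j(|x_j'|) = o(1)$ on the positive-measure set $G$ must then be promoted to constancy of each $\gamma_j$. For $d\ge 3$, fixing $|x_1'|$ and $|x_2'|$ permits $|x_3'|$ to vary throughout the interval $[\bigl||x_1'|-|x_2'|\bigr|,\,|x_1'|+|x_2'|]$ while maintaining $\sum x_j' = 0$, so a Steinhaus-type density argument on $G$ forces $\gamma_3$ to be locally constant and hence constant on its domain; symmetric arguments apply to $\gamma_1,\gamma_2$. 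For $d=2$, setting $f_j(x) := \gamma_j(|x|)$ for $x\in\reals$ converts the cocycle into $f_1(x_1)+f_2(x_2)+f_3(x_1+x_2) = o(1)$ on a positive-measure subset of $\reals^2$; a standard Steinhaus/Cauchy argument then forces each $f_j$ to be affine, and the evenness $f_j(x) = f_j(-x)$ then forces each to be constant. In either case one extracts constants $v_j\in\reals$ with $\sum v_j = 0$ such that $\gamma_j\equiv v_j$.

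Finally, setting $T_j(x',t) = (x',t-v_j)$ produces a vertical skew-shift (in fact a pure vertical translation) $\mbf{T}\in\baff(d)$. Each slice $T_j(E_j)^{x'}$ is then an approximate centered interval of length $h_j(x')$, so $T_j(E_j)$ is close in $L^1$ to $\{(x',t):|t|\le\tfrac12 h_j(x')\} = E_j^\star$, which in turn is close to $r_j\bb_d$; combining yields the desired $|T_j(E_j)\symdif r_j\bb_d| < \eps|\mbf{E}|$. The principal obstacle will be the rigidity argument of the preceding paragraph: extracting exact constancy from an approximate cocycle defined only on a positive-measure subset of the constraint surface $\{\sum x_j'=0\}$, handling the boundary of the triangle-inequality cone, and propagating the local identities obtained via Steinhaus density to the whole domain in a quantitatively controlled way.
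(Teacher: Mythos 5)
Your overall architecture coincides with the paper's: since $\EE=\EE^\dagger$ one has $\EE^\star=\EE^\sstar$, so Proposition~\ref{prop:symmetrizedcompactness} plus a special dilation reduces matters to $\scripte=\bb_d$; then Fubini, the Riesz--Sobolev inequality applied slicewise, Chebyshev, and Lemma~\ref{lemma:zeroprime} show that almost every constrained triple of vertical slices is a $\tau'$--admissible near-extremizer in $\reals^1$, to which the one-dimensional theorem of \cite{christRS3} applies, yielding intervals with centers $c_j$ satisfying an approximate cocycle. The one genuine divergence is in how the center data are rigidified. The paper first invokes Lemma~8.3 of \cite{christyoungest} --- a stability theorem for the approximate relation $\sum_j r_jc_j(x'_j)=o_\delta(1)$ holding off an exceptional set of small $\lambda_\br$--measure --- to get $c_j(r_jx')=\ell_j(x')+o_\delta(1)$ with $\ell_j$ affine and $\sum_j\ell_j\equiv 0$, and only afterwards uses the rotational invariance of $E_j$ about the $x_d$--axis to eliminate the linear part. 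You reverse the order: radial symmetry first ($c_j(x')=\gamma_j(|x'|)$), then a Steinhaus-type argument to force constancy, with a correct dichotomy between $d\ge 3$ (where $|x'_3|$ ranges over an interval for fixed $|x'_1|,|x'_2|$) and $d=2$ (where one falls back on the Cauchy equation plus evenness).

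The difficulty you flag as ``the principal obstacle'' is, however, exactly the content of the lemma the paper cites, and it is the one place your argument is not yet a proof: the cocycle is only approximate and only holds off an exceptional subset of the constraint surface, and the exact-equation Steinhaus/Cauchy arguments do not survive that degradation without a quantitative stability statement. Producing such a statement from scratch is comparable in difficulty to Lemma~8.3 of \cite{christyoungest} itself; once you substitute an appeal to that lemma for the sketched Steinhaus step (after which evenness of the radial functions reduces affine to constant, as you intend), your argument closes. The remaining points --- well-definedness of $c_j(x')$ as, say, the center of mass of the slice, discarding the slices near $|x'|=r_j$ where admissibility fails but whose total contribution is $o_\delta(1)$, and the final assembly $T_j(E_j)\approx E_j^\star\approx r_j\bb_d$ --- are all in line with the paper's proof.
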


The conclusion is nearly the same as that of
Proposition~\ref{prop:symmetrizedcompactness}; the significant change
is the weakening of the hypothesis
from $\mbf{E}^\dagger{}^\star=\mbf{E}$ to $\mbf{E}^\dagger=\mbf{E}$.

In the proof, $o_\delta(1)$ denotes any quantity that depends on $d,\tau,\delta$ and
that tends to zero as $\delta$ tends to zero while $d,\tau$ remain fixed.
We begin by applying 
Proposition~\ref{prop:symmetrizedcompactness} to $\mbf{A}=\EE^\sstar = \EE^\star$, 
which by Lemma~\ref{lemma:idempotent} satisfies $(\mbf{A})^\sstar=\mbf{A}$. 
% marker
By making a linear change of variables in $\reals^d$ of the form
$(x',x_d)\mapsto (rx',\rho x_d)$ for appropriate $r,\rho\in\reals^+$
we may assume that $\br=(r_j)_{1\le j\le 3}=(\omega_d^{-1/d} |E_j|^{1/d})_{1\le j\le 3}$ 
is $\tau$--admissible, that $\max_{1\le j\le 3} r_j =1$,
and that \[\big|\,E_j^\star\symdif r_j \bb_d\big|  = o_\delta(1)\]  for each $j\in\{1,2,3\}$.
Therefore
\begin{equation} \big|\,|E_j^{r_j x'}| - \omega_{d-1}r_j^{d-1} (1-|x'|^2)^{(d-1)/2}\,\big| =o_\delta(1)
\end{equation} for all $x'\in\bb_{d-1}$ outside a set of measure $o_\delta(1)$.

Let $\delta'>0$.
By Lemma~\ref{lemma:zeroprime}, if $\delta$ is sufficiently small then
for all $x'_1\in\bb_{d-1}$ outside a set of measure $<\delta'$, 
for all $(x'_2,x'_3)\in\bb_{d-1}^2$ satisfying $\sum_{j=1}^3 r_j x'_j=0$
outside a set of $\lambda^{x_1}_\br$ measure $<\delta'$,
the ordered triple
$(|E_1^{r_1 x'_1}|, |E_2^{r_2 x'_2}|, |E_3^{r_3 x'_3}|)$
is $\tau'$--admissible, where $\tau'>0$ depends on $\delta',\tau,d$ 
but is independent of $\delta$ so long as $\delta$ is sufficiently
small as a function of $\delta',\tau,d$.
The same statements hold with the roles of the indices $1,2,3$ permuted arbitrarily.

We denote by $\scriptt_1$ the expression
$\scriptt_1(A_1,A_2,A_3) = \int_{(\reals^1)^3} \prod_{j=1}^3 \one_{A_j}(x_j)
\,d\lambda_\br(x_1,x_2,x_3)$ acting on subsets of $\reals^1$,
and by $\scriptt$ the corresponding expression for subsets of $\reals^d$.
These are related by
\begin{align*}
\scriptt(E_1,E_2,E_3)
&= 
\int_{(\reals^{d-1})^3} 
\scriptt_1
\big( E_1^{r_1 x'_1}, E_2^{r_2 x'_2}, E_3^{r_3 x'_3} \big)
\,d\lambda_\br(x'_1,x'_2,x'_3)
\\ &\le o_\delta(1) + \int_{\bb_{d-1}^3} 
\scriptt_1
\big( E_1^{r_1 x'_1}, E_2^{r_2 x'_2}, E_3^{r_3 x'_3} \big)
\,d\lambda_\br(x'_1,x'_2,x'_3)
\end{align*}
where the term $o_\delta(1)$ majorizes the contribution
of points in $(\reals^{d-1})^3\setminus \bb_{d-1}^3$.
Therefore
\begin{align*}
\scriptt(E_1,E_2,E_3)
&\le o_\delta(1) + 
\int_{\bb_{d-1}^3} 
\scriptt_1
\big( (E_1^{r_1 x'_1})^\star, (E_2^{r_2 x'_2})^\star, (E_3^{r_3 x'_3})^\star \big)
\,d\lambda_\br(x'_1,x'_2,x'_3)
\\& \le o_\delta(1) + 
\scriptt(\mbf{E}^\star).
\end{align*}

Therefore
\begin{equation}
\int 
\Big(\scriptt_1
\big( (E_1^{r_1 x'_1})^\star, (E_2^{r_2 x'_2})^\star, (E_3^{r_3 x'_3})^\star \big)
- \scriptt_1
\big( E_1^{r_1 x'_1}, E_2^{r_2 x'_2}, E_3^{r_3 x'_3} \big)
\Big)
\,d\lambda_\br(x'_1,x'_2,x'_3)
= o_\delta(1).
\end{equation}
By the Riesz-Sobolev inequality, the integrand is nonnegative.
Therefore by Chebyshev's inequality,
\begin{equation}\label{eq:fromCheby}
\scriptt_1
\big( (E_1^{r_1 x'_1})^\star, (E_2^{r_2 x'_2})^\star, (E_3^{r_3 x'_3})^\star \big)
- \scriptt_1
\big( E_1^{r_1 x'_1}, E_2^{r_2 x'_2}, E_3^{r_3 x'_3} \big)
= o_\delta(1)
\end{equation}
for all $(x'_1,x'_2,x'_3)\in\bb_{d-1}^3$ satisfying
$\sum_{j=1}^3 r_j x'_j=0$
except for a set whose $\lambda_\br$ measure is $o_\delta(1)$.

Therefore for all $x'_1\in \bb_{d-1}$ outside a set of measure $o_\delta(1)$,
the $\lambda^{x'_1}_{\br}$ measure of the set of all $(x'_2,x'_3)\in \bb_{d-1}\times\bb_{d-1}$
such that $(E_1^{r_1 x'_1},E_2^{r_2 x'_2},E_3^{r_3x'_3})$
is $\tau'$--admissible and \eqref{eq:fromCheby} holds
is bounded below by a positive constant that depends on $d,\tau$ but
is independent of $\delta$ provided that $\delta$ is sufficiently small.

The one-dimensional case of our main theorem, proved in \cite{christRS3},
states that
if $d,\tau$ are fixed and $\delta$ is sufficiently small then
for any such $x'_1$ there exists an interval $J_1^{x'_1}\subset\reals^1$ satisfying
\begin{equation}
\label{setupforfunctleqn}
\big| E_1^{r_1 x'_1} \symdif J^{x'_1}_1\big| \le o_\delta(1)|E_1^{r_1 x'_1}|.
\end{equation}
The corresponding conclusion holds for the sets $E_j$ for $j=2,3$ with corresponding
intervals $J_j^{x'_j}$.
A companion conclusion established in \cite{christRS3} is that the centers 
$c_j(x'_j)$ of these intervals (which are well-defined for all $x'_j\in\bb_{d-1}$ 
outside the exceptional sets introduced above) satisfy
\begin{equation} 
\label{setupforfunctleqn2}
\sum_{j=1}^3 r_j c_j(x'_j)=o_\delta(1) \end{equation}
whenever $(x'_j: 1\le j\le 3)\in\bb_{d-1}^3$
%satisfies $\sum_{j=1}^3 r_j x'_j=0$,
except for a set of values of $(x'_j: 1\le j\le 3)\in\bb_{d-1}^3$
whose $\lambda_\br$ measure is $o_\delta(1)$.

By Lemma~8.3 of \cite{christyoungest}, \eqref{setupforfunctleqn2} implies the existence of
affine mappings $\ell_j:\reals^{d-1}\to\reals^1$
satisfying $\sum_{j=1}^3 \ell_j\equiv 0$
such that for each $j\in\{1,2,3\}$,
\begin{equation} c_j(r_j x') = \ell_j(x') + o_\delta(1) \end{equation}
for all $x'\in\bb_{d-1}$ outside a set of measure $o_\delta(1)$.

Thus $E_j$ has small symmetric difference with the ellipsoid
\[\scripte_j=\{(x';t): |t-\ell_j(x')|^2<r_j(1-|x'|^2) \}.\] 
Since $E_j$ is invariant under rotations of $\reals^d$ about the $x_d$ axis, this forces 
\begin{equation} |\ell_j(x')| = o_\delta(1)\end{equation} 
for most $x'\in\bb_{d-1}$ and hence uniformly for all $x'\in\bb_{d-1}$
since $\ell_j$ is affine.
Therefore 
\begin{equation} |E_j\symdif  r_j\bb_d|\le o_\delta(1)|\EE|.  \end{equation}
This completes the proof of Proposition~\ref{prop:lesssymmetrizedcompactness}.
\qed

\section{Structure of near-extremizers}

The final stage of the analysis is the removal of the Schwarz symmetry hypothesis $\EE=\EE^\dagger$.

\begin{proposition}\label{prop:finalmainstep}
Let $d\ge 2$.
Let $\tau>0$.
There exists $\tau'>0$ such that
for any $\eps>0$
there exists $\delta>0$ with the following property.
For any $\tau$--admissible ordered triple ${\mathbf E}=(E_j: 1\le j\le 3)$
of Lebesgue measurable subsets of $\reals^d$  
satisfying 
\begin{equation} \scriptt({\mathbf E}) \ge (1-\delta)\scriptt({\mathbf E}^\Star)\end{equation}
there exist $T\in{\baff(d)}$
and a $\tau'$--admissible ordered triple $\br=(r_1,r_2,r_3)$ of positive real numbers such that 
\begin{equation}
\big|\, T_j(E_j) \symdif r_j \bb_d \,\big|<\eps|\EE| \ \text{ for all } j\in\{1,2,3\}.
\end{equation} \end{proposition}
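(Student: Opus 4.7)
The plan is to prove Proposition~\ref{prop:finalmainstep} by induction on the dimension $d$, with Theorem~\ref{thm:RShighermain} in dimension $d-1$ serving as the inductive hypothesis and the base case $d=1$ supplied by \cite{christRS3}. The inductive step first reduces to the partially symmetrized triple $\mbf{E}^\dagger$ via Proposition~\ref{prop:lesssymmetrizedcompactness}, and then uses the inductive hypothesis slice-by-slice to recover structural information about $\mbf{E}$ itself.

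Since $(\mbf{E}^\dagger)^\dagger=\mbf{E}^\dagger$ by Lemma~\ref{lemma:idempotent}, and $\mbf{E}^\dagger$ remains a $(1-\delta)$-near extremizer by \eqref{eq:schwarzsymm}, Proposition~\ref{prop:lesssymmetrizedcompactness} applies and furnishes a vertical skew-shift with $|T_j(E_j^\dagger)\symdif r_j\bb_d|<\eps|\mbf{E}|$ for each $j$. Because each $E_j^\dagger$ is already rotationally invariant about the $x_d$-axis, the affine maps $\ell_j$ defining that skew-shift must themselves be $o_\delta(1)$, and after a special dilation absorbing the overall scale one concludes
\[ \bigl|\, E_j^\dagger \symdif r_j \bb_d \,\bigr| = o_\delta(1)\,|\mbf{E}| \]
for $j=1,2,3$, with $r_j=(|E_j|/\omega_d)^{1/d}$. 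Writing
\[ \scriptt(\mbf{E}) = \int_{\sum t_j=0}\scriptt_{d-1}\bigl((E_1)_{t_1},(E_2)_{t_2},(E_3)_{t_3}\bigr)\,d\lambda(t_1,t_2,t_3), \]
and similarly for $\mbf{E}^\dagger$ with each horizontal slice replaced by its Steiner rearrangement, the chain $(1-\delta)\scriptt(\mbf{E}^\Star)\le\scriptt(\mbf{E})\le\scriptt(\mbf{E}^\dagger)\le\scriptt(\mbf{E}^\Star)$ combined with the pointwise nonnegativity of the integrand difference yields, by Chebyshev, that for $(t_1,t_2,t_3)$ outside a set of $\lambda$-measure $o_\delta(1)$ the slice triple $\bigl((E_j)_{t_j}\bigr)_{j=1}^3$ is an $o_\delta(1)$-near extremizer of the $(d-1)$-dimensional Riesz-Sobolev inequality. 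Invoking Lemma~\ref{lemma:paininneck} together with the volume estimates above, one obtains a uniform $\tau'>0$ such that most slice triples are $\tau'$-admissible. The inductive hypothesis in dimension $d-1$ then produces, for each such triple, an ellipsoid $\scripte(t)\subset\reals^{d-1}$ centered at $0$ and vectors $v_j(t)\in\reals^{d-1}$ with $\sum_j v_j(t)=0$ such that $(E_j)_{t_j}$ has small symmetric difference with a suitably scaled and translated copy of $\scripte(t)$.

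The final step is a rigidity argument assembling these per-triple ellipsoids into a single global one. For fixed $t_1$, varying $(t_2,t_3)$ along $t_2+t_3=-t_1$ leaves $(E_1)_{t_1}$ unchanged, which (combined with Burchard's uniqueness within each triple) forces $\scripte(t)$ to coincide, up to scaling and translation, with a single ellipsoid $\scripte_0\subset\reals^{d-1}$ independent of the triple and of the index $j$. The scaling factor is then pinned down by the known volume profile, and the centers $c_j(t_j)\in\reals^{d-1}$ satisfy $\sum_j c_j(t_j)=o_\delta(1)$ whenever $\sum_j t_j=0$; Lemma~8.3 of \cite{christyoungest} then produces affine functions $t\mapsto At+B_j$ with $A\in\reals^{d-1}$ independent of $j$ and $\sum_j B_j=0$ approximating the $c_j$. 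The element of $\baff(d)$ with common linear part $(x',t)\mapsto(x'-At,t)$ and translations $(-B_j,0)$ carries each $E_j$ to a set of small symmetric difference with an ellipsoid of revolution whose horizontal slice at height $t$ is a fixed multiple of $(r_j^2-t^2)^{1/2}\scripte_0$, and a final linear change of variables in $\reals^{d-1}\times\reals$ converts $\scripte_0$ to $\bb_{d-1}$ and adjusts the vertical scale to yield $r_j\bb_d$. The hard part will be this rigidity step: controlling the accumulation of exceptional sets across successive Chebyshev-type arguments, and showing that the slice ellipsoids do not drift in shape with $t$, so that gluing via a single element of $\baff(d)$ is possible.
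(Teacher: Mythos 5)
Your proposal tracks the paper's argument closely through the reduction to $\mbf{E}^\dagger$, the horizontal slicing, the invocation of the $(d-1)$--dimensional case by induction, the per-triple ellipsoids $\scripte(\bt)$ and centers, the transitivity argument identifying a common ellipsoid, and the use of Lemma~8.3 of \cite{christyoungest} to linearize the centers. The divergence -- and the gap -- is in the endgame. You propose to conclude by directly gluing the slice ellipsoids into a single global ellipsoid of revolution via one element of $\baff(d)$, and you yourself flag this as ``the hard part'': controlling the accumulated exceptional sets and ruling out drift of the slice ellipsoid's shape with $t$. That difficulty is real and is not resolved in your write-up. The transitivity argument only identifies the ellipsoids $\scripte(\bt)$ for $\bt$ in a central band $|t_j|\le\eta$, because strict admissibility of the slice triple $(r_j(1-t_j^2)^{1/2})_j$ degenerates as any $|t_j|$ approaches $1$, so the inductive hypothesis gives no usable conclusion there; the paper states this restriction explicitly. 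Extending the identification to the full range of $t$, where the slices are small relative to $|\mbf{E}|$ and the $o_\delta(1)$ errors are only relative to slice measure, is exactly what a direct gluing would require and what is not available.

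The paper avoids this entirely. From the central-band rigidity it extracts only three robust facts: after normalization each $E_j$ essentially contains a fixed ball $B(0,\eta)$ (Lemma~\ref{lemma:gettingthere}), is essentially contained in a fixed ball $B(0,\rho)$ (Lemma~\ref{lemma:cubed}, obtained by running the slicing argument in each coordinate direction), and has vertical slices close to intervals (Lemma~\ref{lemma:gettingthere2}). These yield $L^1$--precompactness of the normalized indicator functions (Corollary~\ref{cor:precompact}, via Rellich's lemma as in \cite{christbmtwo}); a contradiction argument then passes to a limiting triple with $\scriptt(\mbf{E})=\scriptt(\mbf{E}^\Star)$, to which Burchard's exact-equality theorem applies. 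So the single global ellipsoid is produced by compactness plus the equality case, not by assembling the slices. To repair your proof you would either need to supply the quantitative gluing you acknowledge is missing (which would in fact give more than the paper proves), or replace your final step with a precompactness-and-limit argument of this kind.
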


Let $\delta>0$ be small. 
%and suppose that $\mbf{E}$ satisfies 
%$\scriptt({\mathbf E}) \ge (1-\delta)\scriptt({\mathbf E}^\Star)$.
If $\EE$ satisfies the hypotheses then the Schwarz symmetrization $\mbf{E}^\dagger$
is likewise a $\delta$--near extremizer of the Riesz-Sobolev inequality.
%likewise satisfies $\scriptt({\mathbf E}^\dagger) \ge \scriptt({\mathbf E}) 
%\ge (1-\delta)\scriptt({\mathbf E}^\Star) = (1-\delta)\scriptt({\mathbf E}^\dagger{}^\Star).\]
Since $(\mbf{E}^\dagger)^\dagger = \mbf{E}^\dagger$,
Proposition~\ref{prop:lesssymmetrizedcompactness} states that
there exist an ellipsoid $\scripte$ of the form
\[\scripte = \{(x'; x_d)\in\reals^{d-1}\times \reals: \alpha^2|x'|^2+\beta^2 x_d^2\le 1\} \]
%a $\tau'$--admissible ordered triple $\br=(r_j: 1\le j\le 3)$ of positive real numbers,
and $a_j\in\reals$ satisfying $\sum_{j=1}^3 a_j=0$
such that
\begin{equation}\label{eq:schwarzified} 
\big| E_j^\dagger \symdif (r_j\scripte+a_je_d)\big|=o_\delta(1)\end{equation}
for $j\in\{1,2,3\}$,
where $r_j^d|\scripte|=|E_j|$.
% The parameter $\tau'$ depends only on $\tau,d$.
By exploiting the action of the group $\baff(d)$, we may reduce to the situation
in which $\scripte = \bb_d$, $a_j=0$ for all $j\in\{1,2,3\}$, and $\max_j r_j=1$
without disturbing the hypotheses on $\EE$.

%\begin{lemma}
%For each $d\ge 2$ and $\tau>0$ 
%there exists $\eta>0$ with the following property.
%Let $\EE$ satisfy the hypotheses of Proposition~\ref{prop:finalmainstep} with $\max_j|E_j|=1$.
%\end{lemma}

The conclusion relevant to our purpose contained in \eqref{eq:schwarzified}
%there exists an ordered triple $(T_1,T_2,T_3)\in\baff(d)$
%of measure-preserving transformations of $\reals^d$ such that 
is that the Lebesgue measures of the slices $E_j^{(s)}=\{x'\in\reals^{d-1}: (x',s)\in E_j\}$,
which after all are equal to the Lebesgue measures of
the corresponding slices of $E^\dagger$, satisfy
\begin{equation}
\big|\,|E_j^{(r_j t)}|-r_j^{d-1}\omega_{d-1} (1-t^2)^{(d-1)/2}\,\big| =o_\delta(1)
\end{equation}
for all $t\in[-1,1]$ except for a set of measure $o_\delta(1)$, and
% ?? Is this correct? I had some further restriction on $t$ at one point, but now forget why
\[|E_j\setminus (\reals^{d-1}\times[-r_j,r_j])|=o_\delta(1).\]
According to Lemma~\ref{lemma:zeroprime}, 
for any $\eps>0$ there exist $\tau',\eta>0$ 
such that for any sufficiently small $\delta>0$ there exists 
a partition $[-1,1]=\scriptg\cup\scriptb$
with $|\scriptb|<\eps$
such that for each $t_1\in\scriptg$,
the $\lambda_{t_1}$ measure of the set of all $(t_2,t_3)\in[-1,1]^2$
for which $(|E_j^{(r_jt_j)}|^{1/d-1})_{1\le j\le 3}$
%|E_2^{(r_2t_2)}|^{1/d-1}, |E_3^{(r_3t_3)}|^{1/d-1})$
is $\tau'$--admissible is $\ge\eta$.
The same holds with the roles of the three indices $j$ permuted arbitrarily.

%Therefore if $\delta$ is sufficiently small,
%the triple $(E_j^{(r_jt)})_{1\le j\le 3}$ is a $\tau'$--admissible ordered triple of subsets of $\reals^{d-1}$, 
%for all $t\in[-1,1]$ in the complement of a set of Lebesgue measure $o_\delta(1)$.
%Here $\tau'$ depends on $\tau,d$ and on the quantity implicit in the $o_\delta(1)$ upper bound.

% \repair

Now repeat the proof of Proposition~\ref{prop:lesssymmetrizedcompactness},
in particular the analysis of $\scriptt(E_1,E_2,E_3)$,
with the roles of the two factors $\reals^{d-1}$ and $\reals^1$
in the Cartesian product representation $\reals^{d-1}\times\reals^1$
of $\reals^d$ interchanged. 
Invocation of the case $d=1$ of Theorem~\ref{thm:RShighermain} 
is replaced by an invocation of the $d-1$--dimensional case,
which is valid by induction on the dimension.
Conclude that for each $t\in[-1,1]$ outside a set of measure $o_\delta(1)$, 
there exist an ellipsoid $\scripte_1(t)\subset\reals^{d-1}$ centered at $0$
and a vector $v_1(t)\in\reals^{d-1}$ such that $E_1^{(r_1 t)}$ satisfies
\begin{equation} \label{eq:nearlyloid}
\big |E_1^{(r_1t)} \symdif \big[r_1 \scripte_1(t)+  v_1(t)\big]\big|
= o_\delta(1)|E_1^{(r_1 t)}|.
\end{equation}
Corresponding conclusions hold for the indices $j=2,3$.

The reasoning in the proof of Proposition~\ref{prop:lesssymmetrizedcompactness}
together with the induction-on-dimension hypothesis
also guarantee that the ellipsoids $\scripte_j(t_j)$ and vectors $v_j(t_j)$
are compatible in two respects. Firstly, for all ordered triples
$\bt=(t_1,t_2,t_3)\in[-1,1]^3$ that satisfy $\sum_j r_j t_j=0$
such that $(|E_j^{(r_jt_j)}|^{1/d-1})_{1\le j\le 3}$ is $\tau'$--admissible, 
\begin{equation}\label{eq:displacementscompatible} 
\sum_{j=1}^3 v_j(t_j)=o_\delta(1)\end{equation}
except for $\bt$ in a set of $\lambda_\br$ measure.
Secondly, for each element $\bt$ of this same set of ordered triples, 
there exists an ellipsoid $\scripte(\bt)$
such that the above conclusions hold with 
$\scripte_j(t_j)=\scripte(\bt)$ for all three indices $j$.

In order to complete the proof of Proposition~\ref{prop:finalmainstep}
and hence of Theorem~\ref{thm:RShighermain}, it suffices to show 
that there exists a single ellipsoid $\scripte\subset\reals^{d-1}$, centered at $0$,
such that for all $t\in[-1,1]$ outside a set of Lebesgue measure $o_\delta(1)$,
each ellipsoid $\scripte_j(t)$ is nearly homothetic to $\scripte$ in the sense that
\begin{equation}\label{eq:finalclaim} \big| \scripte_j(t) \symdif (1-t^2)^{1/2}\scripte\big|
= o_\delta(1)|E_j^{(r_j t)}|.  \end{equation}

Since $\br$ is $\tau$--admissible,
there exist $\tau',\eta>0$ depending only on $\tau$ such that
$(r_j (1-t_j^2)^{1/2})_{1\le j\le 3}$
is strictly admissible for all $\bt=(t_1,t_2,t_3)$
satisfying $|t_i|\le\eta$ for each index $i$.
If $\delta$ is sufficiently small then for the vast majority
of all such $\bt$ that also satisfy $\sum_{j=1}^3 r_j t_j=0$,
the three slices $E_j^{(r_j t_j)}$ nearly coincide with homothetic ellipsoids
in the sense that $|E_j^{(r_j t_j)}\symdif r_j (1-t_j^2)^{1/2}\scripte(\bt)|= 0_\delta(1)$.
Moreover, $|\scripte(\bt)|\equiv \omega_{d-1}$.
By fixing a typical value $\bar t_1$ of $t_1$ and letting $t_2,t_3$ vary we
conclude that the ellipsoids $\scripte(\bar t_1,t_2,t_3)$
nearly coincide for nearly all $(t_2,t_3)$ satisfying $r_1\bar t_1+r_2 t_2+r_3 t_3=0$
and $|t_i|\le\eta$.
By interchanging the roles of the indices we conclude via transitivity that
$\scripte(\bt)$ nearly coincides with $\scripte(\bt')$
for the vast majority of all ordered pairs ($\bt,\bt'$) satisfying $|t_j|,|t'_j|<\eta$,
$\sum_j r_j t_j=0$, and $\sum_j r_j t'_j=0$.
By fixing a typical value of $\bt'$ we reach the desired conclusion that 
the ellipsoids $\scripte(\bt)$ may all be taken to coincide with a single ellipsoid $\scripte$ 
--- but still under the restriction that $|t_j|\le\eta$ for $j\in\{1,2,3\}$.

The same reasoning as in the proof of Proposition~\ref{prop:lesssymmetrizedcompactness}
based on the approximate functional equation \eqref{eq:displacementscompatible}
proves that the vectors $v_j(t_j)$ take the form \[v_j(t_j)=  t_ju_j+w_j+o_\delta(1)\]
for all $t_j\in[-\eta,\eta]$ outside a set of Lebesgue measure $o_\delta(1)$,
where $u_j,w_j\in\reals^{d-1}$ and $\sum_{j=1}^3 r_j w_j= \sum_{j=1}^3 u_j=0$.
Therefore by transforming $\reals^d$ (separately for each index $j$) 
by an affine automorphism $(x';\,t)\mapsto (x'-tu_j-w_j;\,t)$
we may reduce to the case in which $v_j(t_j)\equiv 0$ for $|t_j|\le\eta$.

Thus far we have established two useful conclusions.

\begin{lemma}\label{lemma:gettingthere} For each $d\ge 2$ and $\tau>0$ 
there exists $\eta>0$ with the following property.
Let $\EE$ satisfy the hypotheses of Proposition~\ref{prop:finalmainstep} with $\max_j|E_j|=1$.
Then there exists an ordered triple $(T_1,T_2,T_3)\in\baff(d)$
of measure-preserving transformations of $\reals^d$ such that 
$(T_j(E_j))_{1\le j\le 3}$ continues to satisfy the hypotheses of
Proposition~\ref{prop:finalmainstep} and 
\begin{equation} |B(0,\eta)\setminus T_j(E_j)| = o_\delta(1)  \end{equation}
for each $j\in\{1,2,3\}$.  \end{lemma}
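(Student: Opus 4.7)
The plan is to package together the conclusions obtained in the paragraphs immediately preceding the lemma. After an appropriate preparatory element of $\baff(d)$, one has $\max_j r_j = 1$, the shifts $v_j(t_j)$ have been absorbed to zero on the slab $|t_j|\le\eta$, and for most such $t_j$ the slice $E_j^{(r_j t_j)}$ agrees with $r_j(1-t_j^2)^{1/2}\scripte$, up to symmetric difference of measure $o_\delta(1)$, where $\scripte\subset\reals^{d-1}$ is a fixed ellipsoid centered at $0$ with $|\scripte|=\omega_{d-1}$. What remains is to straighten $\scripte$ into a Euclidean ball and then integrate this slice-wise approximation in the vertical direction.

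First I would act by a single linear map of $\reals^d$ that normalizes $\scripte$ to $\bb_{d-1}$. Concretely, choose $L\in\gl(d-1)$ with $|\det L|=1$ and $L(\scripte)=\bb_{d-1}$, which is possible since $|\scripte|=\omega_{d-1}$, and set $\tilde L(x',x_d)=(Lx',x_d)\in\gl(d)$. Taking $T_j=\tilde L$ for every $j\in\{1,2,3\}$ gives an element of $\baff(d)$ with trivial translational parts; this preserves both $\tau$-admissibility and the near-extremizer hypothesis because $\scriptt$ is $\baff(d)$-invariant. After applying this transformation, the conclusions recalled above read
\begin{equation*}
\bigl| T_j(E_j)^{(s)} \symdif (r_j^2-s^2)^{1/2}\,\bb_{d-1} \bigr| = o_\delta(1)
\end{equation*}
for every $j\in\{1,2,3\}$ and every $s\in[-\eta r_j,\eta r_j]$ outside a one-dimensional exceptional set of Lebesgue measure $o_\delta(1)$.

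Next I would shrink $\eta$ once more so that $\eta<\min_j r_j$; this is permitted because $\tau$-admissibility together with $\max_j r_j=1$ forces $\min_j r_j\ge\tau$. Then for every $s\in[-\eta,\eta]$ the $s$-slice of $B(0,\eta)$ is the $(d-1)$-dimensional ball of radius $(\eta^2-s^2)^{1/2}$, which sits inside the $(d-1)$-dimensional ball of radius $(r_j^2-s^2)^{1/2}$. Combining this containment with the slice-wise approximation, for most $s\in[-\eta,\eta]$ the $s$-slice of $B(0,\eta)\setminus T_j(E_j)$ has $(d-1)$-measure $o_\delta(1)$, while for the exceptional values of $s$ the slice has $(d-1)$-measure at most $\omega_{d-1}\eta^{d-1}$ and the exceptional set itself has length $o_\delta(1)$. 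Integrating in $s$ yields the bound $|B(0,\eta)\setminus T_j(E_j)|=o_\delta(1)$ for each $j$, as required.

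The only genuine obstacle is bookkeeping: the single map $\tilde L$ used to straighten $\scripte$ must be applied identically to each of $E_1,E_2,E_3$ in order for the resulting ordered triple of transformations to lie in $\baff(d)$. Since the ellipsoid $\scripte$ produced in the preceding discussion was common to all three indices, and since the translational parts had already been reduced to zero, this compatibility holds automatically and no further fine-tuning is required.
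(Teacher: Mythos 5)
Your proof is correct and follows essentially the same route as the paper, which presents Lemma~\ref{lemma:gettingthere} as a direct summary of the preceding analysis (the slice-wise ellipsoid approximation with a common $\scripte$, $|\scripte|=\omega_{d-1}$, and the displacements $v_j$ reduced to zero for $|t_j|\le\eta$). Your explicit straightening of $\scripte$ to $\bb_{d-1}$ by a common determinant-one map and the subsequent Fubini integration simply make precise the step the paper leaves implicit, and your bookkeeping (shrinking $\eta$ using $\min_j r_j\ge\tau$) is the right fix.
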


\begin{lemma}\label{lemma:gettingthere2}
Let $d\ge 2$ and $\tau>0$. 
Let $\EE$ satisfy the hypotheses of Proposition~\ref{prop:finalmainstep}.
For each $j\in\{1,2,3\}$ there exists a partition
$E_j=G_j\cup B_j$ of $E_j$
such that $|B_j|\le o_\delta(1)$ and for each $x'\in\reals^{d-1}$, 
either $|G_j^{x'}|=0$ or there exists an interval $J_{x'}\subset\reals$ such that 
\begin{equation}
\big|G_j^{x'}\symdif J_{x'}\big|\le o_\delta(1)|G_j^{x'}|.
\end{equation}
\end{lemma}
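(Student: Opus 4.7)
The plan is to adapt the slicing argument from the proof of Proposition~\ref{prop:lesssymmetrizedcompactness}, but with the two factors in $\reals^d = \reals^{d-1}\times\reals^1$ playing interchanged roles: decompose $\scriptt(\EE)$ by integrating over vertical slices (rather than horizontal slices), and then invoke the $d=1$ case of Theorem~\ref{thm:RShighermain} to conclude that individual vertical slices $E_j^{x'}$ are close to intervals. Concretely, write
\[ \scriptt(\EE) = \int_{x'_1+x'_2+x'_3=0} \scriptt_1\big(E_1^{x'_1}, E_2^{x'_2}, E_3^{x'_3}\big) \, d\lambda(x'_1, x'_2, x'_3), \]
where $\scriptt_1$ is the one-dimensional trilinear form. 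The pointwise one-dimensional Riesz-Sobolev inequality combined with integration yields $\scriptt(\EE) \le \scriptt(\EE^\star) \le \scriptt(\EE^\Star)$, so the hypothesis $\scriptt(\EE) \ge (1-\delta)\scriptt(\EE^\Star)$ forces the integrated defect between $\scriptt_1\big((E_j^{x'_j})^\star\big)_j$ and $\scriptt_1\big(E_j^{x'_j}\big)_j$ to be at most $\delta\,\scriptt(\EE^\Star)$.

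By Chebyshev's inequality, outside an exceptional set of ordered triples $(x'_1, x'_2, x'_3)$ of $\lambda$-measure $o_\delta(1)$, the one-dimensional defect is $o_\delta(1)$. Provided the slice measure triple $(|E_j^{x'_j}|)_j$ is $\tau'$-admissible for some uniform $\tau'>0$, the one-dimensional case of Theorem~\ref{thm:RShighermain} from \cite{christRS3} produces, for each such triple, intervals $J_{x'_j}\subset\reals$ satisfying $|E_j^{x'_j}\symdif J_{x'_j}| \le o_\delta(1)\,|E_j^{x'_j}|$. A Fubini argument then shows that for each $j$, the set $\scriptb'_j\subset\reals^{d-1}$ of $x'$ for which no approximating interval exists has Lebesgue measure $o_\delta(1)$. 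Define $G_j = \{(x',t)\in E_j : x'\notin\scriptb'_j\}$ and $B_j = E_j\setminus G_j$. The bound $|B_j| = o_\delta(1)$ follows by combining the slab condition $|E_j\setminus(\reals^{d-1}\times[-r_j,r_j])| = o_\delta(1)$ already established with the pointwise bound $|E_j^{x'}\cap[-r_j,r_j]|\le 2r_j$, giving $|B_j|\le 2r_j|\scriptb'_j| + o_\delta(1) = o_\delta(1)$ since $r_j\le 1$.

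The main obstacle is the quantitative admissibility of slice measure triples: because Schwarz symmetrization preserves horizontal slice measures but not vertical ones, the structural information $|E_j^\dagger\symdif r_j\bb_d| = o_\delta(1)$ controls horizontal slice sizes of $E_j$ directly but not vertical ones. Two routes present themselves. One is to exploit the horizontal-slice structure already extracted in the proof of Proposition~\ref{prop:finalmainstep}, namely that for most $t$ with $|t|\le\eta$ the slice $E_j^{(r_j t)}$ is close to the centered ellipsoidal cross-section $r_j(1-t^2)^{1/2}\scripte$; a Fubini argument then shows that most vertical slices of $E_j$ over $x'$ in a uniformly large portion of $r_j\scripte$ have measures comparable to $r_j$, delivering $\tau'$-admissibility via an analogue of Lemma~\ref{lemma:zeroprime}. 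Alternatively, one works with the Steiner-symmetric partner $\EE^\star$, which has the same vertical slice measures as $\EE$ and is itself a near-extremizer, so that admissibility of the triples $(|E_j^{x'_j}|)_j$ for most $(x'_j)$ can be read off from the vertical slice profile of $\EE^\star$ via Lemma~\ref{lemma:zeroprime} applied in the appropriate direction.
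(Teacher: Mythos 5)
Your overall architecture (vertical slicing, pointwise one-dimensional Riesz--Sobolev, Chebyshev, the one-dimensional inverse theorem, Fubini to assemble $B_j$) is reasonable, but it is not the paper's route, and the step you yourself flag as ``the main obstacle'' --- uniform $\tau'$--admissibility of the line-slice measure triples $(|E_j^{x'_j}|)_{1\le j\le 3}$ --- is a genuine gap that neither of your proposed repairs closes. Route (a) yields only two-sided bounds on $|E_j^{x'}|$: a lower bound of order $\eta r_j$ coming from the slices with $|t|\le\eta$ (the only range in which the ellipsoids $\scripte_j(t)$ have been identified with a single centered $\scripte$ and the centers normalized to zero), and an upper bound of order $\rho$ from Lemma~\ref{lemma:cubed}. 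Comparability with ratio not close to $1$ does not imply $\tau'$--admissibility (take $a_1=a_2=c$ and $a_3=C$ with $C>2c$), and there is no ``analogue of Lemma~\ref{lemma:zeroprime}'' based on comparability alone: that lemma rests, via Lemma~\ref{lemma:allplayball} and Burchard's Lemma~7.1, on the exact profile $(r_j^2-s_j^2)^{1/2}$ of the ball. Since the contribution to $|E_j^{x'}|$ from the slices with $|t|>\eta$ comes from ellipsoids $\scripte_j(t)+v_j(t)$ of undetermined shape and position, the vertical slice profile is simply not pinned down at this stage. Route (b) fails for a structural reason: $\EE^\star$ is indeed a near-extremizer with the same vertical slice measures as $\EE$, but none of the established results applies to it --- Proposition~\ref{prop:symmetrizedcompactness} controls $(\EE^\star)^\sstar$, and the intervening Schwarz symmetrization scrambles vertical slice measures, so closeness of $(\EE^\star)^\sstar$ to a ball says nothing about $x'\mapsto|E_j^{x'}|$; invoking the full structure theorem for $\EE^\star$ would be circular, as that is what Proposition~\ref{prop:finalmainstep} is in the course of proving.

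The paper sidesteps the issue entirely. It deduces Lemma~\ref{lemma:gettingthere2} from \eqref{eq:nearlyloid} with the roles of the first and $d$--th coordinates interchanged: the admissibility needed there concerns the measures of the hyperplane slices $\{x\in E_j: x_1=s\}$, and these are controlled because the Schwarz symmetrization adapted to that splitting preserves exactly those slice measures, so Proposition~\ref{prop:lesssymmetrizedcompactness} pins down their profile. The $(d-1)$--dimensional inductive hypothesis then makes those hyperplane slices close to ellipsoids in $\reals^{d-1}$, every line slice of an ellipsoid in the $e_d$ direction is an interval, and Fubini--Chebyshev transfers this to the line slices $E_j^{x'}$. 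If you wish to retain your direct vertical-slicing argument, you must first show that $x'\mapsto |E_j^{x'}|$ is close in $L^1$ to the slice profile of a fixed ellipsoid (equivalently, that $E_j^\star$ is close to an ellipsoid), which is essentially as hard as the lemma itself.
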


In this statement, the intervals $J_{x'}$ are permitted to depend on the indices $j$.
Lemma~\ref{lemma:gettingthere2} follows from \eqref{eq:nearlyloid} by
interchanging the roles of the first and the $d$--th coordinates,
and invoking Fubini's theorem.  \qed

Now let $\EE$ satisfy the normalization $\max_j|E_j|=1$ 
and the conclusion of Lemma~\ref{lemma:gettingthere},
in addition to the hypotheses of Proposition~\ref{prop:finalmainstep}.
Assume without loss of generality that $\max_{j\in\{1,2,3\}}|E_j|=1$.
The $\tau$--admissibility hypothesis guarantees that $|E_k|\ge\tau$ for all $k\in\{1,2,3\}$.

Denote elements of $\reals^d$ by $x=(x_1,x_2,\dots,x_d)$.
Consider any index $i\in\{1,2,\dots,d\}$,
and apply the first part of the above analysis to $\EE$,
with the roles of the $i$--th and the $d$--th coordinates interchanged.
Conclude --- without making any supplementary changes of variables ---
that for $j\in\{1,2,\dots,d\}$ there exist $s>0$, $c_{j}\in\reals$,
and $\alpha$,
each of which potentially depends also on the index $i$, such that for each 
$t\in[-s+c_{j},s+c_{j}]$,
\begin{equation} \label{eq:nearend;slicemeasures}
\big|\, |\{x\in E_j: x_i = t+c_j\}| - \alpha (s^2-t^2)^{(d-1)/2} \,\big| =o_\delta(1)
\end{equation}
except for a set of parameters $t\in[-s+c_j,s+c_j]$ having one-dimensional
Lebesgue measure $o_\delta(1)s$.
Moreover, 
\begin{equation}\label{eq:nearend;outside}
|\{x\in E_j: x_i\notin[-s+c_i,s+c_i]\}| = o_\delta(1)s,\end{equation}
and $\omega_d s\alpha^{d-1}=|E_j|$. 
The quantities $s,\alpha$ are independent of $j$, while $\sum_{j=1}^3 c_j=0$.

Since no changes of variables have been made, $E_j$ contains a $d$--dimensional
ball $B(0,\eta)$, whose radius $\eta>0$ depends only on the dimension $d$.
This together with \eqref{eq:nearend;slicemeasures} 
implies a lower bound $\alpha\gtrsim \eta$, 
% this lower bound depends only on $d,\tau$. 
while \eqref{eq:nearend;outside} forces a lower bound $s\gtrsim \eta$.
The relation $\omega_d s\alpha^{d-1}=|E_j|$ then forces upper bounds on both $s,\alpha$.
Likewise, \eqref{eq:nearend;outside} implies an upper bound on $|c_j|$.
We have proved:

\begin{lemma}\label{lemma:cubed}
For each $d\ge 2$ and $\tau>0$ 
there exists $\rho>0$ with the following property.
Let $\EE$ satisfy the hypotheses of Proposition~\ref{prop:finalmainstep}
with $\max_j|E_j|=1$,
and satisfy the conclusion of Lemma~\ref{lemma:gettingthere}.
If $\delta$ is sufficiently small then 
for each $j\in\{1,2,3\}$,
\begin{equation} |E_j\setminus B(0,\rho)| = o_\delta(1).\end{equation}
\end{lemma}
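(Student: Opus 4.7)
The plan is to synthesize the bounds already extracted by applying the foregoing analysis separately in each of the $d$ coordinate directions and then intersecting the resulting slabs. For each index $i \in \{1, \ldots, d\}$ and each $j \in \{1,2,3\}$, rerun the argument leading to \eqref{eq:nearend;slicemeasures}--\eqref{eq:nearend;outside} with the role of the $d$-th coordinate played by the $i$-th. This produces parameters $s^{(i)}, c_j^{(i)}, \alpha^{(i)}$ and, crucially, the escape bound
$$\bigl| \{ x \in E_j : x_i \notin [c_j^{(i)} - s^{(i)},\ c_j^{(i)} + s^{(i)}] \} \bigr| \ = \ o_\delta(1).$$
The three paragraphs preceding the lemma have already shown that, with the inclusion $B(0,\eta) \subset E_j$ supplied by Lemma~\ref{lemma:gettingthere} and the normalization $\max_j|E_j|=1$, each of $s^{(i)}$, $\alpha^{(i)}$, $|c_j^{(i)}|$ lies in a range $[c,C]$ with $c, C > 0$ depending only on $d$ and $\tau$. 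These bounds are uniform in $i$ because the ball $B(0,\eta)$ is produced once by Lemma~\ref{lemma:gettingthere} and is common to every coordinate direction, and because no further change of variables is performed during the applications to different $i$.

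With these uniform bounds in hand, I would introduce the axis-parallel box
$$Q_j \ :=\ \prod_{i=1}^d \bigl[ c_j^{(i)} - s^{(i)},\ c_j^{(i)} + s^{(i)} \bigr] \ \subset\ [-C,C]^d \ \subset\ B(0, C\sqrt{d}).$$
The containment
$$E_j \setminus Q_j \ \subset\ \bigcup_{i=1}^d \bigl\{ x \in E_j : x_i \notin [c_j^{(i)} - s^{(i)},\ c_j^{(i)} + s^{(i)}] \bigr\}$$
combined with the union bound yields
$$\bigl|E_j \setminus B(0, C\sqrt d)\bigr| \ \le\ \bigl|E_j \setminus Q_j\bigr| \ \le\ \sum_{i=1}^d o_\delta(1) \ =\ o_\delta(1).$$
Setting $\rho := C\sqrt d$ completes the proof.

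No genuine obstacle arises; the entire argument amounts to bookkeeping, namely checking that the constants hidden in the slab-escape bound and in the ranges for $s^{(i)}, \alpha^{(i)}, |c_j^{(i)}|$ are independent of $i$, $j$, and (for small enough $\delta$) of $\delta$. This uniformity is a direct consequence of the fact that Lemma~\ref{lemma:gettingthere} is applied exactly once, producing the ball $B(0,\eta)$ which then serves as the common quantitative input for the per-direction analyses.
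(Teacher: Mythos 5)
Your proposal is correct and follows essentially the same route as the paper: re-run the slicing analysis in each coordinate direction without further changes of variables, use the approximate inclusion $B(0,\eta)\subset E_j$ from Lemma~\ref{lemma:gettingthere} together with $\omega_d s\alpha^{d-1}=|E_j|\le 1$ to pin $s^{(i)},\alpha^{(i)},|c_j^{(i)}|$ between uniform constants, and intersect the resulting slabs via a union bound. The paper leaves the final box-intersection step implicit, but your bookkeeping matches its argument exactly.
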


\begin{corollary} \label{cor:precompact}
Let $d\ge 2$ and $\tau>0$.
Let $(\EE_n)_{n\in\naturals} 
= (E_{n,1},E_{n,2},E_{n,3})_{n\in\naturals}$ be a sequence of $\tau$--admissible 
ordered triples of Lebesgue measurable subsets of $\reals^d$
satisfying $\max_{1\le j\le 3}|E_{n,j}|=1$. Suppose that
\begin{equation} \scriptt({\mathbf E_n}) \ge (1-\delta_n)\scriptt({\mathbf E_n^\Star})
\ \text{where $\lim_{n\to\infty}\delta_n=0$.}
\end{equation}
Then there exists a sequence $\mbf{T_n}=(T_{n,j})_{1\le j\le 3}$ of elements of $\baff(d)$
such that for each index $j\in\{1,2,3\}$, $T_{n,j}$ is measure-preserving and
the sequence of indicator functions $(\one_{T_{n,j}(E_{n,j})})_{n\in\naturals}$
is precompact in $L^1(\reals^d)$.  \end{corollary}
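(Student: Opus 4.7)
The plan is to deduce the corollary directly from Proposition~\ref{prop:finalmainstep} by a compactness argument on the parameters $\br$ and on the scale factor of the element of $\baff(d)$ produced by the proposition, followed by extraction of a convergent subsequence.

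First, for each $\eps>0$ let $\delta(\eps)>0$ be the quantity provided by Proposition~\ref{prop:finalmainstep} for the given $\tau$. Since $\delta_n\to0$, one can choose a sequence $\eps_n\to 0$ with $\delta_n\le\delta(\eps_n)$ for all sufficiently large $n$, for instance by setting $\eps_n=\inf\set{1/k:\delta(1/k)\ge\delta_n}$. Applying Proposition~\ref{prop:finalmainstep} to $\EE_n$ with parameter $\eps_n$ produces $\mbf{S_n}=(S_{n,j})\in\baff(d)$ with common linear part $A_n\in\gl(d)$, together with a $\tau'$--admissible ordered triple $\br_n=(r_{n,1},r_{n,2},r_{n,3})$, such that
\begin{equation*}
  \big|S_{n,j}(E_{n,j})\symdif r_{n,j}\bb_d\big|<\eps_n\ \text{ for each } j\in\{1,2,3\}.
\end{equation*}

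Next I adjust by a common scalar dilation in order to promote $\mbf{S_n}$ to a measure-preserving triple. Let $\mu_n=|\det A_n|^{1/d}$ and define $T_{n,j}(x)=\mu_n^{-1}S_{n,j}(x)$, so that $T_{n,j}(x)=\mu_n^{-1}A_n x+\mu_n^{-1}v_{n,j}$. The common linear part is $\mu_n^{-1}A_n$, which has determinant $\pm 1$, and the translations still satisfy $\sum_j\mu_n^{-1}v_{n,j}=0$; thus $\mbf{T_n}\in\baff(d)$ and each $T_{n,j}$ is measure-preserving. Setting $\tilde r_{n,j}=\mu_n^{-1}r_{n,j}$, the scaling property of Lebesgue measure gives
\begin{equation*}
  \big|T_{n,j}(E_{n,j})\symdif \tilde r_{n,j}\bb_d\big|<\mu_n^{-d}\eps_n,
\end{equation*}
while $|T_{n,j}(E_{n,j})|=|E_{n,j}|$ and $\omega_d\tilde r_{n,j}^d=|T_{n,j}(E_{n,j})|+o(1)=|E_{n,j}|+o(1)$. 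By the $\tau$--admissibility of $\EE_n$ combined with the normalization $\max_j|E_{n,j}|=1$, each $|E_{n,j}|$ lies in $[\tau^d,1]$; hence the $\tilde r_{n,j}$ are trapped in a compact subinterval $[c_\tau,C_\tau]\subset(0,\infty)$, and in particular $\mu_n^{-d}\eps_n=O(\eps_n)$ tends to zero.

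Finally I extract a convergent subsequence. Given any subsequence of $\naturals$, the sequence $(\tilde r_{n,j})_{n}$ lies in a compact set, so a further subsequence yields $\tilde r_{n,j}\to\tilde r_j\in[c_\tau,C_\tau]$ for each $j\in\{1,2,3\}$. Along this sub-subsequence
\begin{equation*}
  \big|T_{n,j}(E_{n,j})\symdif \tilde r_j\bb_d\big|
  \le \big|T_{n,j}(E_{n,j})\symdif\tilde r_{n,j}\bb_d\big|
      +\big|\tilde r_{n,j}\bb_d\symdif\tilde r_j\bb_d\big|
  \longrightarrow 0,
\end{equation*}
so $\one_{T_{n,j}(E_{n,j})}\to\one_{\tilde r_j\bb_d}$ in $L^1(\reals^d)$ for each $j$. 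Since every subsequence admits such a further convergent subsequence, the full sequence $(\one_{T_{n,j}(E_{n,j})})_{n\in\naturals}$ is precompact in $L^1(\reals^d)$, as required. The only subtle point is the passage from $\mbf{S_n}$ to a measure-preserving $\mbf{T_n}\in\baff(d)$; once the scalar dilation normalization is in place, there is no essential obstacle since the heavy lifting has already been done in Proposition~\ref{prop:finalmainstep}.
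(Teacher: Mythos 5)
Your proposal is circular within the logical structure of the paper. You deduce Corollary~\ref{cor:precompact} from Proposition~\ref{prop:finalmainstep}, but the paper never proves that proposition directly: the argument in this section only establishes the partial conclusions recorded in Lemmas~\ref{lemma:gettingthere}, \ref{lemma:gettingthere2} and \ref{lemma:cubed} (after a measure-preserving normalization in $\baff(d)$, each $E_j$ essentially contains a fixed ball $B(0,\eta)$, is essentially contained in a fixed ball $B(0,\rho)$, and has almost every vertical slice close to an interval). The corollary is then proved from those three lemmas together with Rellich's lemma and Fourier-transform upper bounds, and \emph{only afterwards} is Proposition~\ref{prop:finalmainstep} deduced from the corollary via the limiting argument and Burchard's theorem --- the paper says explicitly that the corollary ``is an equivalent restatement'' of the proposition and derives the proposition from it by contradiction. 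So the ``heavy lifting'' you defer to Proposition~\ref{prop:finalmainstep} is precisely the lifting the corollary exists to perform; at the point where the corollary must be proved, one does not yet know that the sets are close to ellipsoids, and a correct proof has to extract $L^1$ precompactness from the weaker structural information of Lemmas~\ref{lemma:gettingthere}--\ref{lemma:cubed}.

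There is also a secondary, internal gap in your renormalization step, independent of the circularity. You need $\mu_n^{-d}\eps_n\to 0$ in order for $\big|T_{n,j}(E_{n,j})\symdif\tilde r_{n,j}\bb_d\big|$ to tend to zero, and you justify the boundedness of $\mu_n^{-d}$ by confining $\tilde r_{n,j}$ to a compact interval; but that confinement is itself obtained from $\omega_d\tilde r_{n,j}^d=|E_{n,j}|+O(\mu_n^{-d}\eps_n)$, i.e.\ from the smallness of the very quantity $\mu_n^{-d}\eps_n$ you are trying to control. Proposition~\ref{prop:finalmainstep} as stated gives an \emph{absolute} symmetric-difference bound $\eps|\EE|$ and places no lower bound on $|\det A_n|$; if $|\det A_n|\to 0$ the proposition's conclusion becomes nearly vacuous and your rescaled bound degenerates. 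This loop would need to be broken explicitly (for instance by first arguing that $|\det A_n|$ may be taken bounded below in terms of $\tau$ once $\eps_n$ is small), but even with that repair the argument would remain unusable here for the reason given above.
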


\begin{proof}
Choose $\mbf{T_n}$ as in Lemma~\ref{lemma:gettingthere}. 
By an argument given for this same purpose in \cite{christbmtwo},
Rellich's lemma, Lemma~\ref{lemma:gettingthere2}, Lemma~\ref{lemma:cubed},
and simple Fourier transform upper bounds all together establish precompactness
of the resulting normalized sequences of indicator functions.
\end{proof}

If $\mbf{E_n}$ satisfies the hypotheses of Corollary~\ref{cor:precompact} and 
$\one_{E_{n,j}}\to f_j$ in $L^1$ norm for each $j\in\{1,2,3\}$,
then $f_j=\one_{E_j}$ for Lebesgue measurable sets satisfying
$\tau\le|E_j|\le 1$, $\mbf{E}=(E_j)_{1\le j\le 3}$ is $\tau$--admissible,
and $\scriptt(\mbf{E}) = \scriptt(\mbf{E}^\Star)$ by continuity of $\scriptt$. 
By Burchard's theorem \cite{burchard}, $\EE$ is an ordered triple of homothetic ellipsoids. 
The $L^1$ convergence means precisely that $|E_{n,j}\symdif E_j|\to 0$. 

Corollary~\ref{cor:precompact} is an equivalent restatement of 
Proposition~\ref{prop:finalmainstep}. If  
the Proposition were false then there would exist a sequence
$(\EE_n)_{n\in\naturals}$ satisfying the hypotheses of Corollary~\ref{cor:precompact},
and $\eps>0$ independent of $n$,
such that for all $n$ and all ordered triples $(\scripte_1,\scripte_2,\scripte_3)$
of homothetic ellipsoids,
$\max_{1\le j\le 3} |E_j\symdif \scripte_j|\ge\eps$. 
This is a contradiction.


\begin{thebibliography}{20}

%Freiman, {\em Foundations of a Structural Theory of Set Addition},
% Translations of Mathematical Monographs, v.~37,
% American Mathematical Society, Providence, Rhode Island, 1973

\bibitem{burchard} A.~Burchard,
{\em Cases of equality in the Riesz rearrangement inequality}, 
Ann. of Math. (2) 143 (1996), no. 3, 499--527

%\bibitem{christradon} M.~Christ, 
%{\em Extremizers of a Radon transform inequality}, preprint  math.CA arXiv:1106.0719,
%to appear in proceedings of Princeton symposium in honor of E.~M.~Stein 

%\bibitem{christRS1} \bysame,
%{\em An approximate inverse Riesz-Sobolev rearrangement inequality}, preprint, math.CA arXiv:1112.3715

\bibitem{christyoungest} \bysame,
{\em On near-extremizers for Young's inequality for $\reals^d$}, preprint,
math.CA arXiv:1112.4875

\bibitem{christbmtwo} \bysame,
{\em Near equality in the two-dimensional Brunn-Minkowski inequality}, math.CA arXiv:1206.1965

\bibitem{christbmhigh} \bysame,
{\em Near equality in the Brunn-Minkowski inequality},
math.CA arXiv:1207.5062 

\bibitem{christRS3} \bysame,
{\em Near equality in the Riesz-Sobolev inequality},
math.CA arXiv:1309.5856 

%\bibitem{christHY} \bysame, {\em Near equality in the Hausdorff-Young inequality},

%\bibitem{christellipsoidFT} \bysame,
%{\em On an extremization problem concerning Fourier coefficients}, preprint

\bibitem{christflock} M.~Christ and T.~Flock,
{\em Cases of equality in certain multilinear inequalities of Hardy-Riesz-Brascamp-Lieb-Luttinger type},
to appear, Journal of Functional Analysis.
Preprint math.CA arXiv:1307.8439

\bibitem{figallijerison}
A.~Figalli and D.~Jerison,
{\em On the addition of sets in $\reals^n$: a quantitative stability result},
preprint 2013

\begin{comment}
\bibitem{freiman1959}
G.~A.~Fre{\u\i}man,
{\em The addition of finite sets. I}, (Russian) 
Izv. Vys\v S. U\v cebn. Zaved. Matematika,
1959 no. 6 (13), 202--213

\bibitem{freiman1962}
\bysame,
{\em Inverse problems of additive number theory. VI. On the addition of finite sets. III}, 
(Russian) 
Izv. Vys\v S. U\v cebn. Zaved. Matematika
1962 no. 3 (28), 151--157

\bibitem{freiman}
\bysame,
{\em Structure theory of set addition}, Ast\'erisque No. 258 (1999), xi, 1--33
\end{comment}

%\bibitem{gardner}
%R.~J.~Gardner, {\em The Brunn-Minkowski inequality}, 
%Bull. Amer. Math. Soc. (N.S.) 39 (2002), no. 3, 355--405.


%\bibitem{greenruzsa} B.~Green and  I.~Z.~Ruzsa, 
%{\em Sum-free sets in abelian groups}, Israel J. Math. 147 (2005), 157--188

% Tao's version of Kemperman's inequality, for general finite abelian groups
% lower bounds for multiplicities
% See Prop 6.1

%\bibitem{kemperman1956}
%J.~H.~B.~Kemperman,
%{\em On complexes in a semigroup}, Indag. Math. 18 (1956), 247-254.

%\bibitem{kemperman1964} J.~H.~B.~Kemperman, 
%{\em On products of sets in a locally compact groups},
%Fund. Math. 56 (1964), 51--68, Israel J. Math. 147 (2005), 157--188

%\bibitem{liebHLS}
%E.~Lieb, {\em Sharp constants in the Hardy-Littlewood-Sobolev and related inequalities}, 
%Ann. of Math. (2) 118 (1983), no. 2, 349--37

\bibitem{liebloss} E.~H.~Lieb and M.~Loss, 
{\em Analysis}, Amer. Math. Soc., Providence, RI, 1997

%\bibitem{pollard} J.~M.~Pollard, 
%{\em A generalisation of the theorem of Cauchy and Davenport}, 
%J. London Math. Soc. (2) 8 (1974), 460--462
% Kemperman-Tao inequality --- case of cyclic groups


\bibitem{riesz}
F.~Riesz,
{\em Sur une in\'egalit\'e int\'egrale}, Journal London Math. Soc. 5 (1930), 162--168

%\bibitem{ruzsa} I.~Z.~Ruzsa,
%{\em A concavity property for the measure of product sets in groups}, 
%Fund. Math. 140 (1992), no. 3, 247--254
% simple proof of Kemperman's theorem

%Sierpinski, Fund Math 1920

\bibitem{sobolev} S.~L.~Sobolev,
{\em On a theorem of functional analysis}, Mat. Sb. (N.S.) 4 (1938), 471--479,
A. M. S. Transl.  Ser. 2, 34 (1963), 39-68

%\bibitem{taoblog}
%T.~Tao,  \url{http://terrytao.wordpress.com/2011/12/26/a-variant-of-kempermans-theorem/}.

%\bibitem{taospending}
%T.~Tao, 
%\bysame, {\em Spending Symmetry}, to appear

\bibitem{taovu} T.~Tao and V.~Vu, {\em Additive Combinatorics},
Cambridge Studies in Advanced Mathematics, 105. Cambridge University Press, Cambridge, 2006
\end{thebibliography}
 \end{document}